\theoremstyle{plain}
\newtheorem{theorem}{Theorem}[section]
\newtheorem{lemma}[theorem]{Lemma}
\newtheorem{corollary}[theorem]{Corollary}
\newtheorem{proposition}[theorem]{Proposition}
\newtheorem{definition}[theorem]{Definition}
\newtheorem{point}[theorem]{}
\newcommand{\D}{\mathcal{D}}
\newcommand{\m}{\mathfrak{m}}
\newcommand{\n}{\mathfrak{n}}
\newcommand{\grade}{\operatorname{grade}}
\newcommand{\injdim}{\operatorname{injdim}}
\newcommand{\Ass}{\operatorname{Ass}}
\newcommand{\height}{\operatorname{height}}
\newcommand{\Supp}{\operatorname{Supp}}
\newcommand{\Der}{\operatorname{Der}}
\newcommand{\Spec}{\operatorname{Spec}}
\newcommand{\Hom}{\operatorname{Hom}}
\newcommand{\Mod}{\operatorname{Mod}}
\newcommand{\Ext}{\operatorname{Ext}}
\theoremstyle{plain}
\begin{document}
\title{\large \textbf{Local cohomology modules of a regular affine domain}}
\author{\textsc{Sayed Sadiqul Islam}}
\address{Department of Mathematics, IIT Bombay, Powai, Mumbai 400076, India}
\email{ssislam1997@gmail.com, 22d0786@iitb.ac.in}
\date{\today}

	\subjclass{Primary 13D45; Secondary 13N10, 13H10}
	\keywords{D-modules, local cohomology, injective resolutions}

\begin{abstract}
	For a Noetherian commutative ring $R$, let $H^i_I(R)$ be the $ i$-th local cohomology module of $R$ with respect to $I$. In \cite{Hel-08}, Hellus posed the question of identifying rings $R$ such that $\injdim_R H^i_I(R)=\dim_R(\Supp_R H^i_I(R))$. In this paper, we show that  a regular affine domain over a field of characteristic $0$ satisfies this condition. In fact, we prove that $\injdim_RH^i_I(R)\geq \dim_R(\Supp_R H^i_I(R))-1$ when $R$ is a differentiably admissible $K$-algebra. Indeed, we establish both of these conclusions for a substantially broad class of functors known as Lyubeznik functors. We also prove that if $R$ is a polynomial ring over a differentiably admissible $K$-algebra, then $\Ass_R H^i_I(R)$ is finite for all $i\geq 0$ and for every ideal $I$ of $R$.
	\end{abstract}
	 \maketitle
\section{Introduction}
Throughout this paper, $R$ is a commutative Noetherian ring with unity. For a locally closed subscheme of $\Spec(R)$, let $H^i_Y(M)$ denotes the $i$-th local cohomology module of $M$ supported in $Y$. When $Y$ is closed in $\Spec(R)$, defined by an ideal $I$ of $R$, $H^i_Y(M)$ is  denoted by $H^i_I(M)$. 

Local cohomology modules are not finitely generated in general. For example, if $(R,\mathfrak{m})$ is a local ring of dimension $d\geq 1$, then $H^d_\mathfrak{m}(R)$ is never finitely generated. The lack of finite generation has turned attention of researchers to study other type of finiteness properties. In this direction, one deals with the following finiteness properties of local cohomology modules over certain regular rings.
\begin{enumerate}
        \item $H^j_\m(H^i_I(R))$ is injective, where $\m$ is maximal ideal of $R$.
        \item $\injdim_R H^i_I(R)\leq \dim_R( \Supp_R H^i_I(R))$.
        \item  The set of associated primes of $H^i_I(R)$ is finite.
        \item  All the Bass numbers of $H^i_I(R)$ are finite.
    \end{enumerate}
The above properties were first proved by Huneke and Sharp in \cite{HS-93} for regular rings of characteristic $p>0$ using Frobenius map. Later Lyubeznik in his remarkable paper \cite{Lyu-93} used the theory of $\D$-modules and proved properties (1), (2) and (4) for large class of functors (known as Lyubeznik functors) over regular rings containing a field of characteristic 0. He also proved (3) under the assumption that $R$ is local. Later in \cite{Lyu-97} Lyubeznik developed the theory of $F$-modules over regular rings containing a field of characteristic $P>0$ and proved the results of Huneke and Sharp for Lyubeznik functor. Recently,  Bhatt et al. in \cite{BBLSZ-14} proved property (4) for smooth $\mathbb{Z}$-algebras.

In \cite{Hel-08}, Hellus deals with following question.
\medskip

\noindent
\textbf{Question:}
When does $\injdim_R H^i_I(R)=\dim_R(\Supp_R H^i_I(R))$ hold?
\medskip

He also provided a positive answer to this question when $R$ is  either Noetherian regular local ring containing a field with $H^i_I(R)$ a $I$-cofinite or $R$ is Noetherian local Gorenstein ring with $H^n_I(R)=0$ for $n\neq i$; see \cite[Corollary 2.6, Remark 2.7]{Hel-08}. Hellus further provided examples of $R$ where the equality does not hold; see \cite[Example 2.9, 2.11]{Hel-08}.

Let $R$ be the polynomial ring $K[x_1,\ldots,x_d]$ where $K$ is a field of characteristic $0$ and let $\mathcal{T}$ be a Lyubeznik functor in the category of $R$-modules. In \cite{Put-14}, Puthenpurakal proved that if $c=\injdim_R \mathcal{T}(R)$, then $\mu_c(P,\mathcal{T}(R))=0$ for each non-maximal prime ideal $P$ of $R$. As a consequence, he proves that  $\injdim_R(\mathcal{T}(R))=\dim_R(\Supp_R \mathcal{T}(R))$. Later in \cite{WZ-17}, Zhang proved a stronger version of these results by establishing it for holonomic $\mathcal{D}_K(R)$-modules. Puthenpurakal also asked if these results can be extended to positive characteristic. In \cite{WZ-17}, Zhang gives positive answer to these question as well by proving $\injdim_R M=\dim_R(\Supp_R M)$ for $F_R$-finite, $F_R$-module $M$ where $R$ is a regular ring of finite type over an infinite field of characteristic $p>0$.

Taking these results as motivation, we now state the results proved in this paper.\medskip

\noindent
\textbf{Theorem A.} (Theorem \ref{P is maixmal ideal main th})\phantomsection\label{P is maximal ideal at inj dim}
 \textit{Let $K$ be a field of characteristic $0$ and $R$ be a regular affine domain over $K$. Let $\injdim_R\mathcal{T}(R)=c$ for some Lyubeznik functor $\mathcal{T}$ on $\Mod(R)$. If the Bass number $\mu_c(P,\mathcal{T}(R))>0$ for some prime ideal $P$ of $R$, then $P$ is a maximal ideal of $R$.}
 \medskip

 Note that if $R$ is a regular $K$-algebra and $\mathcal{T}$ is a Lyubeznik functor then $\injdim_R \mathcal{T}(R)\leq \dim_R (\Supp_R\mathcal{T}(R))$, see \cite[Theorem 3.4]{Lyu-93} and \cite[Theorem 1.4]{Lyu-97}. Surprisingly, as a direct implication of \hyperref[P is maximal ideal at inj dim]{Theorem A}, we establish that the equality holds for regular affine domains over a field of characteristic $0$.\medskip
 
 \noindent
\textbf{Corollary B.} (Corollary \ref{proof of the corollary})\label{Corollary about inj dim}
  \textit{Let $K$ be a field of characteristic $0$ and $R$ be a regular affine domain over $K$. Let $\mathcal{T}$ be a Lyubeznik functor on $\Mod(R)$. Then, $\injdim_R \mathcal{T}(R)=\dim_R(\Supp_R \mathcal{T}(R))$.}
 \medskip
 
 Effort has also been made to obtain lower bound for the injective dimension of $F$-module and $\D$-module. M. Dorreh in \cite{MD-16} proved that if $M$ is either a $F$-finite, $F$-module over a regular local ring of characteristic $p>0$ or if a module of the form $H^i_I(R)_g$ over a regular ring of characteristic $0$, then $\injdim_R M\geq \dim_R(\Supp_R M)-1$.
Later, Zhang and Switala in \cite{SZ-19} proved same result for holonomic $\D$-modules over formal power series ring over a field of characteristic $0$ and for $F$-finite, $F$-modules over regular rings of characteristic $p>0$. When the regular ring  does not contain a field, the bounds on injective dimension of local cohomology modules are different. In \cite{CZ-98}, Zhou proved that  $\injdim_RH^i_I(R)\leq \dim_R(\Supp_R H^i_I(R))+1$ when $R$ is an unramified regular local of mixed characteristic and $I$ is an ideal of $R$.

The next result we prove is the following.\medskip
 
 \noindent
\textbf{Theorem C.} (Theorem \ref{proof of bound on inj dim}) \phantomsection \label{lower bound on injective dim}
\textit{Let $R$ be a differentiably admissible $K$-algebra where $K$ is a field of characteristic $0$. Let $\mathcal{T}$ be a Lyubeznik functor on $\Mod(R)$. Then $$\injdim_R\mathcal{T}(R)\geq \dim_R(\Supp_R \mathcal{T}(R))-1.$$}

 As mentioned at the very beginning of the Introduction, substantial progress has been made toward identifying rings for which local cohomology modules have only finitely many associated primes.
  In fact, in \cite{Lyu-99} Lyubeznik conjectured the following.\medskip

\noindent
\textbf{Conjecture:}
Let $R$ be a regular ring and $I$ be an ideal of $R$. Then for each $i\geq 0$, the set $\Ass_R H^i_I(R)$ is finite.
\medskip

In general, the above problem has a negative answer when $R$ is not regular. Singh gave the first example of a singular ring $R$ having an ideal $I$ such that $\Ass_R H^i_I(R)$ is infinite; see \cite{AS-00}. Further negative examples appear in Katzman's work \cite{Kat-02} and Singh-Swanson's \cite{Ass-04}.

Now we state the final result of this paper.
\medskip 

\noindent
\textbf{Theorem D.} (Theorem \ref{proof of finite ass prime for diff ad alg}) \phantomsection \label{finiteness of associated primes}
 \textit{Let $R$ be a differentiably admissible $K$-algebra and $S$ be the polynomial ring $R[x_1,\ldots,x_m]$. If $I$ is an ideal of $S$, then the set $\Ass_S H^i_I(S)$ is finite.}
 \medskip

We now describe in brief the contents of this paper. The paper is organized in five sections. Section 2 introduces the necessary preliminaries and key supporting results. Section 3 is devoted to the proof of \hyperref[P is maximal ideal at inj dim]{Theorem A} and as an interesting consequence we derive \hyperref[Corollary about inj dim]{Corollary B}. In section 4, we prove \hyperref[lower bound on injective dim]{Theorem C}. Finally, the last section proves  \hyperref[finiteness of associated primes]{Theorem D}.
\section{Preliminaries}
This section reviews the theory of $\D$-modules, introducing necessary definitions and results that will be referenced throughout the paper.
\begin{point}\normalfont
\textbf{Rings of differential operators:}
Let $R$ be a commutative $A$-algebra. The ring of $A$-linear differential operators of $R$ is a subring $\D_A(R)\subseteq \Hom_{A}(R,R)$ whose elements are defined inductively as follows: the differential operators of order zero are defined by the multiplication by elements of $R$, i.e. $\D^0(R)\cong R$. Suppose we have defined operators of order $< n$. An operator $\delta \in \Hom_{A}(R,R)$ is of order less than or equal to $n$ if $[\delta,r]=\delta r-r\delta$ is an operator of order less than or equal to $n-1$ for all $r\in R$. We have a filtration $\D_A^0(R)\subset \D_A^1(R)\subset\ldots$ and the ring of differential operators is defined as
$$\D_A(R)=\bigcup_{m\geq 0} \D_A^m(R).$$

 Let $M$ be a $\D_A(R)$-module and $f\in R$. Then $M_f$ is also a $\D_A(R)$-module such that the natural map $M\rightarrow M_f$ is a morphism of $\D_A(R)$-modules. Let $I\subseteq R$ be an ideal of $R$ generated by $\underline f= f_1,f_2,\ldots,f_r\in R$, and let $M$ be an $R$-module. Then the \v Cech complex of $M $ with respect to  $\underline f$ is defined by 
 $$
  \Check{C}^{\bullet}(\underline f,M):\  0\rightarrow M\rightarrow \bigoplus_iM_{f_i}\rightarrow \bigoplus_{i,j}M_{f_if_j}\rightarrow\ldots \rightarrow M_{f_1\ldots f_r}\rightarrow 0
 $$
 where the maps on every summand are localization map upto a sign. The local cohomology module of $M$ with support on $I$  is defined by 
 $$H_I^i(M)=H^i( \Check{C}^{\bullet}(\underline f,M))$$
 Therefore, it follows that every local cohomology module over a $\D_A(R)$-module is again a $\D_A(R)$-module.
 \end{point}
\begin{point}
    \normalfont
    \textbf{Rings of differentiable type:}   We say that an associative ring $R$ is filtered if there exists an ascending filtration $\Sigma_0\subset\Sigma_1\subset\Sigma_2\ldots$ of additive subgroups such that $1\in \Sigma_0$, $\bigcup \Sigma_i=R$ and $\Sigma_i\Sigma_j\subset\Sigma_{i+j}$ for every $i$ and $j$. We denote by $gr^{\Sigma}(R)$ the associated graded ring which is defined as $$gr^{\Sigma}(R):=\Sigma_0\oplus \Sigma_1/\Sigma_0\oplus \Sigma_2/\Sigma_1\oplus\ldots.$$
    $R$ is said to be a ring of differentiable type if $gr^{\Sigma}(R)$ is commutative Noetherian regular with unity and each graded maximal ideals have same height \cite[Definition 1.1.1]{MM-91}.
  
    Let $R$ be a filtered ring of differentiable type with filtration $\{\Sigma_i\}$. A filtration $\{\Gamma_i\}$ compatible with $\{\Sigma_i\}$ on a left $R$-module $M$ is called good if the associated graded $gr^{\Sigma}(R)$-module $gr^{\Gamma}(M)$ is finitely generated. It is well known that a left $R$-module $M$ can be equipped with good filtration if and only if it is finitely generated and the Krull dimension of $gr^{\Sigma}(R)$-module $gr^{\Gamma}(M)$ does not depend on the choice of the good filtration $\Gamma$ of $M$. This number is called Bernstein dimension of $M$ and denoted by $d(M)$.
\end{point}
Let us recall the following result from \cite{MM-91}.
\begin{theorem}\cite[Theorem 1.2.2]{MM-91}
    Let $R$ be a ring of differentiable type. For a nonzero finitely generated left or right $R$-module $M$, the following holds
    $$\grade_R(M)+d(M)=\dim (gr^{\Sigma}(R))$$ where  $$\grade_R(M)=\operatorname{inf}\{j\geq 0: \Ext_R^i(M,R)\neq 0\}.$$
\end{theorem}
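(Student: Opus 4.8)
The plan is to transport everything across a good filtration and reduce to commutative algebra over the regular ring $S:=gr^{\Sigma}(R)$; write $n:=\dim S$, which by the definition of a ring of differentiable type is the common height of all maximal ideals of $S$. The only commutative input needed is the following. Since $S$ is Noetherian regular and all its maximal ideals have height $n$, each localization $S_{\m}$ at a maximal ideal is a regular local ring of dimension $n$, hence Cohen--Macaulay, catenary and equidimensional; it follows that $\height\p+\dim S/\p=n$ for every $\p\in\Spec S$, and therefore that for every nonzero finitely generated $S$-module $N$ one has $\grade_S(N)=\inf\{\height\p:\p\in\Supp_S N\}=n-\dim_S N$. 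In particular $\Ext^i_S(N,S)=0$ for $i<n-\dim_S N$ while $\Ext^{\,n-\dim_S N}_S(N,S)\neq 0$; and if $\p$ is a minimal prime of $\Supp_S N$ with $\dim S/\p=\dim_S N$, then $S_{\p}$ is regular local of dimension $j:=n-\dim_S N$, so $\Ext^{i}_{S_{\p}}(N_{\p},S_{\p})$ is concentrated in the single degree $j$ (it vanishes for $i<j=\grade$, and for $i>j$ because $j$ is the global dimension of $S_{\p}$).

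Next, fix a good filtration $\Gamma$ on $M$, so that $gr^{\Gamma}(M)$ is a finitely generated $S$-module with $\dim_S gr^{\Gamma}(M)=d(M)$. Because $S$ is Noetherian regular, $R$ is Noetherian of finite global dimension, and $M$ admits a strict filtered free resolution $F_{\bullet}\to M$ by finitely generated filtered free $R$-modules whose associated graded $gr^{\Gamma}(F_{\bullet})$ is a free resolution of $gr^{\Gamma}(M)$ over $S$. Applying $\Hom_R(-,R)$ produces a filtered complex of filtered free modules, and strictness guarantees that its associated graded complex is exactly $\Hom_S(gr^{\Gamma}(F_{\bullet}),S)$. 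Hence the spectral sequence of this filtered complex has $E_1$-page computing $\Ext^{\bullet}_S(gr^{\Gamma}(M),S)$, converges to the (good, hence separated and exhaustive) filtration on $\Ext^{\bullet}_R(M,R)$, and each term $E_r^{p,q}$ is a subquotient of $\Ext^{p+q}_S(gr^{\Gamma}(M),S)$.

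The equality $\grade_R(M)=n-d(M)$ then follows in two steps. For $\grade_R(M)\geq n-d(M)$: the $E_1$-terms in total degree $i<n-d(M)=\grade_S(gr^{\Gamma}(M))$ all vanish, hence so do the corresponding $E_{\infty}$-terms, so $gr\,\Ext^i_R(M,R)=0$ and therefore $\Ext^i_R(M,R)=0$ for such $i$ since that filtration is separated and exhaustive. For $\grade_R(M)\leq n-d(M)$: put $j:=n-d(M)$, choose a minimal prime $\p$ of $\Supp_S gr^{\Gamma}(M)$ with $\dim S/\p=d(M)$, and localize the entire spectral sequence at $\p$; by the commutative input every $E_r^{p,q}$ with $p+q\neq j$ becomes $0$ after localization, so every differential into or out of total degree $j$ vanishes and $\bigoplus_p(E_{\infty}^{p,j-p})_{\p}\cong\Ext^j_{S_{\p}}\!\big(gr^{\Gamma}(M)_{\p},S_{\p}\big)\neq 0$, whence $gr\,\Ext^j_R(M,R)\neq 0$ and so $\Ext^j_R(M,R)\neq 0$. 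Combining the two steps gives $\grade_R(M)+d(M)=n=\dim(gr^{\Sigma}(R))$, and the right-module case is identical after passing to $R^{\mathrm{op}}$, whose (commutative) associated graded ring is again $S$. I expect the main obstacle to be the filtered bookkeeping rather than anything conceptual: checking that a strict filtered free resolution exists, that $gr$ commutes with $\Hom_R(-,R)$ on it so the $E_1$-page is the commutative $\Ext$, that the induced filtration on $\Ext^{\bullet}_R(M,R)$ is separated, and that localizing at the $S$-prime $\p$ is compatible with the $S$-linear differentials $d_r$ and with convergence; the commutative-algebra core and the two vanishing/non-vanishing arguments are then routine.
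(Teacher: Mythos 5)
The paper itself gives no proof of this statement---it is quoted verbatim from \cite[Theorem 1.2.2]{MM-91}---so I compare your argument with the standard proof in that source, whose overall route (strict filtered free resolution, spectral sequence of the filtered complex $\Hom_R(F_\bullet,R)$, reduction to a dimension formula over $S:=gr^{\Sigma}(R)$) you do follow. However, your ``commutative input'' is false as stated, and this is a genuine gap, not bookkeeping. The definition of a ring of differentiable type only requires the \emph{graded} maximal ideals of $S$ to have a common height $n$; it does not make $S$ equicodimensional. Already in the archetypal example where $R$ is the ring of $K$-linear differential operators of $K[[t]]$ with its order filtration, $S\cong K[[t]][\xi]$: the unique graded maximal ideal $(t,\xi)$ has height $2=n$, but $(t\xi-1)$ is a maximal ideal of height $1$ (the quotient is the field $K((t))$). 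Consequently your claims ``$\height\p+\dim S/\p=n$ for every $\p\in\Spec S$'' and ``$\grade_S(N)=n-\dim_S N$ for every nonzero finitely generated $N$'' both fail (take $N=S/(t\xi-1)$: $\grade_S N=1$ while $n-\dim_S N=2$). Your non-vanishing step invokes exactly this false lemma when it asserts that a minimal prime $\p$ of $\Supp_S gr^{\Gamma}(M)$ with $\dim S/\p=d(M)$ has $\dim S_{\p}=n-d(M)$.

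The argument can be repaired, but only by using the grading, which your write-up never exploits: $\ann_S gr^{\Gamma}(M)$ is a graded ideal, so the minimal primes of $\Supp_S gr^{\Gamma}(M)$ are graded, and it suffices to prove $\height\p+\dim S/\p=n$ for \emph{graded} primes $\p$. This version is true: any graded prime is contained in a graded maximal ideal $(\m_0,S_+)$, and localizing there (regular local, hence Cohen--Macaulay) gives $\dim S/\p\ge n-\height\p$; conversely every maximal ideal $\m$ of $S$ satisfies $\height\m\le n$, since for non-graded $\m$ one has $\height\m\le\height\m^{*}+1$, where $\m^{*}$, the largest graded ideal inside $\m$, is a graded prime properly contained in a graded maximal ideal (standard facts on graded rings, cf.\ \cite[Section 1.5]{Bruns}); localizing at any $\m\supseteq\p$ then gives $\dim(S/\p)_{\m}\le n-\height\p$. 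With this graded correction your two spectral-sequence steps (vanishing of $\Ext^i_R(M,R)$ for $i<n-d(M)$, non-vanishing at $i=n-d(M)$ after localizing at a graded minimal prime of the support) do yield $\grade_R(M)=\grade_S(gr^{\Gamma}(M))=n-d(M)$, and the deferred filtered bookkeeping (existence of strict filtered free resolutions, $gr\,\Hom_R(F,R)\cong\Hom_S(gr\,F,S)$ for finitely generated filtered frees, separation and convergence of the induced bounded-below filtration on $\Ext_R^{\bullet}(M,R)$) is standard for positively filtered rings with Noetherian regular associated graded and is precisely where the cited source spends its effort.
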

In particular, $d(M)\geq \dim (gr^{\Sigma}(R))-\operatorname{w.gl.dim}(A)$ where $\operatorname{w.gl.dim}(A)$ denote the weak global dimension of $A$. If this is an equality (or if $M=0$), then $M$ is said to be holonomic or module in the Bernstein class.

Now we define what is called a differentiably admissible algebras over a field of characteristic $0$. These algebras were introduced by N\'{u}\~{n}ez-Betancourt in \cite{NB-13}.
\begin{definition}
   Let $K$ be  a field of characteristic $0$. We call a $ K$-algebra $R$ differentiably admissible if it is  Noetherian, regular with unity and satisfies the following conditions:
        \begin{enumerate}[\rm (1)]
            \item $R$ is equi-codimensional of dimension $n$, i.e, the height of any maximal ideal is equal to $n$.
            \item  Every residual field with respect to a maximal ideal is an algebraic extension of $K$.
            \item $\Der_K(R)$ is a finitely generated projective $R$-module of rank $n$ and $R_\mathfrak m\otimes_R\Der_K(R)=\Der_K(R_\mathfrak m)$ for each maximal ideal $\mathfrak m$ of $R$.
        \end{enumerate}
\end{definition}
This hypothesis of the above definition is inspired by a result of Mebkhout and Narv\'{a}ez-Macarro; see \cite[1.1.2]{MM-91}. There $R$ is a commutative Noetherian ring containing a field $K$ of characteristic 0 and satisfies hypothesis (1) and (2) of the definition but instead of condition (3), there exist $K$-linear derivations $\partial_1,\ldots,\partial_n\in \Der_K(R)$ and $a_1,\ldots,a_n\in R$ such that $\partial_i(a_j)=\delta_{ij}$. In this way, the algebras considered  by N\'{u}\~{n}ez-Betancourt  includes a larger class of rings; see
\cite[Proposition 2.6]{NB-13}. 

It is clear that the localization of a differentiably admissible $K$-algebra  at a maximal ideal and the completion of a differentiably admissible $K$-algebra at a maximal ideal are again a differentiably admissible $K$-algebra. If $R$ is a differentiably admissible $K$-algebra of dimension $d$, then the power series ring $R[[y]]$ is also a admissible $K$-algebra of dimension $d+1$ \cite[Proposition 5.5]{MJB-21}. It is worth mentioning that the property does not carry over polynomial rings (for example, see \cite[Page 28]{MJB-21}). We note that any equidimensional regular finitely generated $K$-algebra is also differentiably admissible \cite[Proposition 1.2.4.1]{MM-91}. Interested readers are referred to \cite{Put-18} and \cite{SP-24} for some recent examples. We also note that the authors in \cite{ACR-17} extend most of Lyubeznik's finiteness properties for $\D$-modules over formal power series rings to differentiably admissible $K$-algebras.

 Let $R$ be a differentiably admissible $K$-algebra and $\D$ be the corresponding ring of differential operator on $R$. We refer \cite{Lyu-93} for the definition of Lyubeznik functor, but we note that the composition of local cohomology functors of the form $ H^{i_1}_{I_1}(H^{i_2}_{I_2}(\dots H^{i_r}_{I_r}(-)\dots)$ are significant examples of Lyubeznik functors. The following result is important for us.
\begin{theorem}\cite[Theorem 4.4]{NB-13}\label{finite associated prime for diff adm alg}
    Let $R$ be a differentiably admissible $K$-algebra and let $\mathcal{T}$ be a Lyubeznik functor on $\Mod(R)$. Then, $\Ass_R\mathcal{T}(R)$ is finite. In particular, this holds for $H^i_I(R)$ for every $i\geq 0$ and ideal $I$ of $R$.
\end{theorem}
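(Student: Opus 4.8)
The plan is to place the ring of differential operators $\D:=\D_K(R)$ within the theory of rings of differentiable type recalled above, so as to have at hand the notion of a \emph{holonomic} $\D$-module together with its two decisive properties: the class of holonomic modules is closed under subobjects, quotients, finite direct sums, extensions and localization, and every holonomic module has finite length as a $\D$-module. Granting this, the argument proceeds in three steps: (i) check that $\D$ is a ring of differentiable type, so that Bernstein dimension $d(-)$ is defined and ``holonomic'' means $d(M)=n$ with $n:=\dim R$; (ii) deduce that $\mathcal{T}(R)$ is holonomic for every Lyubeznik functor $\mathcal{T}$; (iii) using a $\D$-composition series, reduce the finiteness of $\Ass_R\mathcal{T}(R)$ to the statement that a simple $\D$-module has exactly one associated prime, and prove that.

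For step (i): since $\charec K=0$ and, by condition (3) of differentiable admissibility, $\Der_K(R)$ is a finitely generated projective $R$-module of rank $n$, the ring $\D$ is generated over $R$ by $\Der_K(R)$ and its order filtration $\{\D^m_K(R)\}$ has associated graded ring $gr(\D)\cong\operatorname{Sym}_R\Der_K(R)$, which is Zariski-locally on $\Spec R$ a polynomial ring over $R$ in $n$ variables. Hence $gr(\D)$ is commutative, Noetherian and regular, and --- using that $R$ is equi-codimensional of dimension $n$ (condition (1)) --- each of its graded maximal ideals has height $2n=\dim gr(\D)$; so $\D$ is of differentiable type. The grade formula of \cite{MM-91} then gives $\grade_{\D}(M)+d(M)=2n$ for every nonzero finitely generated $\D$-module $M$, with $n$ the maximal value of $\grade_{\D}(-)$ and hence the minimal value of $d(-)$; $M$ is holonomic when this minimum is attained, i.e. $d(M)=n$, equivalently $\grade_{\D}(M)=n$. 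In particular $R$ itself is holonomic. For step (ii) the crucial point is that the localization functor $M\mapsto M_f$ preserves holonomicity; this is proved, as in Bernstein's original argument for the Weyl algebra, via the existence of a nonzero $b$-function (functional equation) for $f$, whose existence over a differentiably admissible $K$-algebra is due to N\'{u}\~{n}ez-Betancourt \cite{NB-13} (see also \cite{ACR-17}). I expect producing and exploiting this $b$-function to be the principal technical obstacle; in a write-up I would invoke it rather than reprove it. Granting it, for holonomic $M$ every term of a \v Cech complex $\Check{C}^{\bullet}(\underline{f},M)$ is holonomic, hence so is each $H^i_I(M)$, being a subquotient of those terms; since the class of holonomic modules is also closed under finite direct sums, kernels and cokernels, and a Lyubeznik functor is assembled by iteration from local cohomology with respect to locally closed subsets together with these operations, $\mathcal{T}(R)$ is holonomic.

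For step (iii): being holonomic, $\mathcal{T}(R)$ has finite length as a $\D$-module, so it admits a $\D$-composition series $0=N_0\subsetneq N_1\subsetneq\cdots\subsetneq N_\ell=\mathcal{T}(R)$, and from $\Ass_R\mathcal{T}(R)\subseteq\bigcup_{t=1}^{\ell}\Ass_R(N_t/N_{t-1})$ it suffices to treat a simple $\D$-module $N$. I claim $\Ass_R N$ is a single prime. For any ideal $J=(g_1,\dots,g_r)$ of $R$, the submodule $\Gamma_J(N)=\{\,y\in N:\ J^ky=0\text{ for some }k\,\}$ is the kernel of the $\D$-linear localization map $N\to\bigoplus_i N_{g_i}$, hence a $\D$-submodule of $N$; by simplicity it is $0$ or $N$. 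If $P\in\Ass_R N$, say $P=\ann_R x$ with $0\neq x\in N$, then $x\in\Gamma_P(N)$, so $\Gamma_P(N)=N$; that is, $N$ is $P$-power-torsion, whence $\Supp_R N\subseteq\variety(P)$. The same reasoning applied to any $Q\in\Ass_R N$ gives $\Supp_R N\subseteq\variety(Q)$; since $P$ and $Q$ both lie in $\Supp_R N$, this forces $P\subseteq Q$ and $Q\subseteq P$, so $P=Q$. Hence $\Ass_R N$ is a single prime, and the theorem follows; the final ``in particular'' is the case $\mathcal{T}(-)=H^i_I(-)$.
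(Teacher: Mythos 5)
Your proposal is correct, but note that the paper does not prove this statement at all: it is quoted verbatim from N\'u\~nez-Betancourt \cite[Theorem 4.4]{NB-13}, and your outline essentially reconstructs the argument of that cited source (and of \cite{MM-91}): $\D_K(R)$ is of differentiable type, $\mathcal{T}(R)$ is holonomic because holonomicity passes through localization via the Bernstein--Sato functional equation and through the \v{C}ech/Lyubeznik constructions, holonomic modules have finite $\D$-length, and a simple $\D$-module has at most one associated prime since $\Gamma_P(N)$ is a $\D$-submodule. So the approach matches the intended proof; the only point to flag is that the two genuinely hard inputs (the $b$-function over differentiably admissible algebras and the Bernstein inequality/finite-length property) are invoked from \cite{NB-13} rather than proved, which is exactly how the paper itself treats them.
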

 \section{Holonomic D-modules over differentiably admissible algebras}
Throughout this section, we continue to assume $K$ to be a field of characteristic $0$. In this section, we prove \hyperref[P is maximal ideal at inj dim]{Theorem A} and as an interesting consequence we derive \hyperref[Corollary about inj dim]{Corollary B}. Before that we need some results about holonomic $\D$-modules over differentiably admissible $K$-algebra. 
 
We now recall the following result from \cite{Mac-14}. 
\begin{theorem}\cite[Proposition 1.2.3.9]{Mac-14}\label{localization at a maximal of diff admissible alg}
    Let $R$ be a  differentiably admissible $K$-algebra. For any maximal ideal $\m$ of $R$, the canonical maps $$R_{\m}\otimes_R \D(R,K)\rightarrow \D(R_{\m},K)\leftarrow \D(R,K) \otimes_R R_{\m}$$ are isomorphisms.
   \end{theorem}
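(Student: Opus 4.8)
The plan is to pass to the order filtration on $\D(R,K)$ and reduce both isomorphism claims to the corresponding statement about associated graded rings, where everything is controlled by the module of $K$-linear derivations and hence by condition (3) of differentiable admissibility.

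The first step is to record the structural input. For a differentiably admissible $K$-algebra $R$, the order filtration $\D^0(R,K)\subset\D^1(R,K)\subset\cdots$ is exhaustive with $\D^0(R,K)=R$, and the principal symbol map induces an isomorphism of graded $R$-algebras $gr\,\D(R,K)\cong\operatorname{Sym}_R\!\big(\Der_K(R)\big)$; the same applies to the localization $R_\m$, which is again differentiably admissible. This is where the smoothness built into the definition enters: at $\m$ one has a local coordinate system, i.e.\ elements $a_1,\dots,a_n\in R_\m$ and derivations $\partial_1,\dots,\partial_n$ with $\partial_i(a_j)=\delta_{ij}$ (condition (3) together with \cite{NB-13}), which forces $\D(R_\m,K)=\bigoplus_{\alpha}R_\m\,\partial^{\alpha}$ and pins down the symbol algebra; one then patches using that $\Der_K(R)$ is finitely generated projective of rank $n$. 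Verifying this in the generality of differentiably admissible (not merely finite-type) algebras is the main obstacle, and it is essentially the content of the Mebkhout--Narv\'{a}ez-Macarro theory \cite{MM-91,Mac-14}.

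Granting this, I would consider the canonical map $\varphi\colon R_\m\otimes_R\D(R,K)\to\D(R_\m,K)$ sending $a\otimes\delta$ (with $a\in R_\m$) to $a$ times the unique extension of $\delta$ to a differential operator on $R_\m$; it is a ring homomorphism, and it is filtered for the filtration $\{R_\m\otimes_R\D^m(R,K)\}_{m\ge0}$ on the source, whose terms are genuine $R_\m$-submodules because localization is exact. Since localization is flat, forming $gr$ commutes with this base change, so $gr(\varphi)$ is the natural map
$$R_\m\otimes_R\operatorname{Sym}_R\!\big(\Der_K(R)\big)\longrightarrow\operatorname{Sym}_{R_\m}\!\big(\Der_K(R_\m)\big).$$
As $\operatorname{Sym}$ commutes with the base change $R\to R_\m$, the left-hand side equals $\operatorname{Sym}_{R_\m}\!\big(R_\m\otimes_R\Der_K(R)\big)$, and condition (3) gives $R_\m\otimes_R\Der_K(R)=\Der_K(R_\m)$; hence $gr(\varphi)$ is an isomorphism. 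Both filtrations being exhaustive and concentrated in degrees $\ge 0$, an induction on filtration degree---using the five lemma on the short exact sequences $0\to F_{m-1}\to F_m\to gr_m\to 0$---upgrades this to the assertion that $\varphi$ itself is an isomorphism.

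For the second map $\D(R,K)\otimes_R R_\m\to\D(R_\m,K)$ the identical argument applies: the $\D^m(R,K)$ are sub-$(R,R)$-bimodules, right localization is again flat and exact, and on associated graded the right $R$-action agrees with the left one since $gr\,\D(R,K)$ is commutative with $R$ sitting in degree $0$; so $gr$ of this map is once more $\operatorname{Sym}$ of the isomorphism $R_\m\otimes_R\Der_K(R)\cong\Der_K(R_\m)$, and the same degreewise induction concludes. Alternatively, the right version follows formally from the left one via the transpose anti-automorphism of $\D(R,K)$.
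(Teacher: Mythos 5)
First, note that the paper does not actually prove this statement: it is quoted verbatim from Narv\'aez-Macarro \cite[Proposition 1.2.3.9]{Mac-14}, so there is no internal argument to compare yours against; the question is whether your sketch is a genuine proof. Your reduction mechanism is fine: the canonical maps are filtered for the order filtration, localization is flat so it commutes with passing to graded pieces, and the five-lemma induction on the exhaustive filtration correctly upgrades a graded isomorphism to the filtered one; your handling of the right-hand map through the bimodule structure (the two $R$-actions agree on the associated graded) is also correct.

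The genuine gap is in the step you yourself flag as ``the main obstacle'': you take as structural input the global isomorphism $gr\,\D(R,K)\cong\operatorname{Sym}_R(\Der_K(R))$ for a differentiably admissible algebra, justified by exhibiting coordinates and then ``patching using that $\Der_K(R)$ is finitely generated projective.'' A differentiably admissible algebra has coordinates $\partial_i,a_j$ with $\partial_i(a_j)=\delta_{ij}$ only after localizing at a maximal ideal (this is what condition (3) together with regularity and \cite{NB-13} gives), and the patching step --- checking that the canonical map $\operatorname{Sym}_R(\Der_K(R))\to gr\,\D(R,K)$ is an isomorphism by looking at maximal ideals --- requires knowing beforehand that $(\D^m(R,K))_{\m}\cong\D^m(R_{\m},K)$ for each $m$, which is exactly the theorem being proved. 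So as written the argument is circular: either the graded input is assumed from \cite{Mac-14,MM-91}, in which case you have deferred precisely the content of the cited proposition, or it must itself be proved by establishing the order-by-order localization statement directly (e.g.\ showing $\D(R_\m,K)$ is generated over $R_\m$ by $\Der_K(R_\m)=R_\m\otimes_R\Der_K(R)$ via the local coordinates, plus an injectivity argument), which is the actual work. A smaller point: the closing alternative via a ``transpose anti-automorphism'' of $\D(R,K)$ is not available in this generality --- a formal adjoint fixing $R$ requires a trivialization of the top exterior power of $\Der_K(R)$, which a differentiably admissible algebra need not possess --- so the bimodule argument you gave first is the one to keep.
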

    We recall a result from \cite{ACR-17} that we need to prove our result.
    \begin{proposition}\cite[Proposition 2.5]{ACR-17}\label{completion preserves holonomicity}
    Let $(R,\m,K)$ be local differentiably admissible $k$-algebra. If $M$ is a $\D$-module, then $\widehat{R}\otimes_R M$ is a $\mathcal{D}(\widehat{R}, K)$-module. Moreover, $M$ is holonomic $\D$-module if and only if $\widehat{R}\otimes_R M$ is holonomic $\mathcal{D}(\widehat{R}, K)$-module.
\end{proposition}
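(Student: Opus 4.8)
The plan is to deduce both assertions from a base-change statement for rings of differential operators, namely the completion analogue of Theorem~\ref{localization at a maximal of diff admissible alg}: the canonical $\widehat{R}$-algebra map
\[
\widehat{R}\otimes_R\D(R,K)\ \longrightarrow\ \D(\widehat{R},K)
\]
is an isomorphism. The idea of proof is the usual one via associated graded rings. Since $R$ is local differentiably admissible, $\Der_K(R)$ is free of rank $n=\dim R$, say on $\partial_1,\dots,\partial_n$, and because $\charec K=0$ the ring $\D(R,K)$ is generated over $R$ by $\Der_K(R)$ and carries the order filtration $\Sigma$ with $gr^{\Sigma}\D(R,K)\cong R[\xi_1,\dots,\xi_n]$, the $\xi_i$ being the symbols of the $\partial_i$. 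The completion $\widehat{R}$ is again local differentiably admissible of dimension $n$, with residue field algebraic over $K$; the $\partial_i$ extend to (automatically continuous) $K$-derivations of $\widehat{R}$ which freely generate $\Der_K(\widehat{R})$, so the displayed map is filtered and induces on associated gradeds the isomorphism $\widehat{R}\otimes_R R[\xi_1,\dots,\xi_n]\xrightarrow{\ \sim\ }\widehat{R}[\xi_1,\dots,\xi_n]$, hence is itself an isomorphism. In particular $\D(\widehat{R},K)=\widehat{R}\otimes_R\D(R,K)$ is a faithfully flat right $\D(R,K)$-module, because $\widehat{R}$ is faithfully flat over the local ring $R$, and both $\D(R,K)$ and $\D(\widehat{R},K)$ are Noetherian, their associated gradeds being so.

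Granting this, for any $\D(R,K)$-module $M$ there is a canonical isomorphism
\[
\widehat{R}\otimes_R M\ \cong\ \bigl(\widehat{R}\otimes_R\D(R,K)\bigr)\otimes_{\D(R,K)}M\ \cong\ \D(\widehat{R},K)\otimes_{\D(R,K)}M
\]
compatible with the underlying $R$-module structures, which is the first assertion. For the second I would use the grade characterization of holonomicity: a nonzero finitely generated $\D(R,K)$-module $N$ satisfies $\grade_{\D(R,K)}(N)\le n$, with holonomicity equivalent to $\grade_{\D(R,K)}(N)=n$, that is, $d(N)=n$ (this combines Bernstein's inequality with the grade formula \cite[Theorem~1.2.2]{MM-91} and the fact that $R$ is regular of dimension $n$). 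Now if $M$ is finitely generated over $\D(R,K)$, then so is $\widehat{R}\otimes_R M$ over $\D(\widehat{R},K)$; resolving $M$ by finitely generated free $\D(R,K)$-modules and applying the exact functor $\widehat{R}\otimes_R(-)$ yields, for all $i$,
\[
\Ext^i_{\D(\widehat{R},K)}\bigl(\widehat{R}\otimes_R M,\ \D(\widehat{R},K)\bigr)\ \cong\ \widehat{R}\otimes_R\Ext^i_{\D(R,K)}\bigl(M,\ \D(R,K)\bigr).
\]
Faithful flatness of $\widehat{R}$ over $R$ then forces $\grade_{\D(\widehat{R},K)}(\widehat{R}\otimes_R M)=\grade_{\D(R,K)}(M)$, and since $\dim\widehat{R}=\dim R=n$ the two holonomicity conditions coincide; in the direction starting from the hypothesis that $\widehat{R}\otimes_R M$ is holonomic, one first descends finite generation of $M$ along $R\to\widehat{R}$, which is legitimate since $\D(\widehat{R},K)$ is flat over $\D(R,K)$ and $\widehat{R}$ is faithfully flat over $R$.

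The main obstacle is the base-change isomorphism $\widehat{R}\otimes_R\D(R,K)\xrightarrow{\ \sim\ }\D(\widehat{R},K)$ itself, or rather the inputs to the associated-graded argument: one must know that passing to the completion creates no new $K$-linear differential operators, and that $\Der_K(\widehat{R})$ stays free of rank $n$ and is obtained from $\Der_K(R)$ by base change. This rests on the continuity of derivations of a complete Noetherian local ring, on the vanishing of $\Omega_{L/K}$ for the separable (as $\charec K=0$) algebraic residue extension $L/K$, and on $\widehat{R}$ being differentiably admissible so that $\D(\widehat{R},K)$ is a ring of differentiable type with associated graded $\widehat{R}[\xi_1,\dots,\xi_n]$; one must also check that the flatness, Noetherianity and finite-generation-descent arguments carry over to the noncommutative rings $\D(R,K)$ and $\D(\widehat{R},K)$. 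All of this is the content of the cited results of Narv\'{a}ez-Macarro \cite{Mac-14} and of N\'{u}\~{n}ez-Betancourt and collaborators, so in practice I would invoke those rather than reprove the structure of $\D(\widehat{R},K)$ from scratch.
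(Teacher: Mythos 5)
The paper offers no proof of this proposition at all --- it is quoted directly from \cite[Proposition 2.5]{ACR-17} --- so the only comparison available is with the cited source, and your sketch is essentially that argument: base change of the ring of differential operators under completion, $\widehat{R}\otimes_R\D(R,K)\xrightarrow{\sim}\D(\widehat{R},K)$ (legitimate because the completion of a differentiably admissible $K$-algebra at its maximal ideal is again differentiably admissible, with $\Der_K(\widehat{R})\cong\widehat{R}\otimes_R\Der_K(R)$), followed by faithful flatness of $\widehat{R}$ over $R$ and the grade/dimension characterization of holonomicity via \cite[Theorem 1.2.2]{MM-91}. I see no gap beyond the structural facts you explicitly defer to \cite{Mac-14} and \cite{ACR-17}, which is exactly what the paper itself does, so your reconstruction is correct and consistent with the quoted result.
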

Similarly, we have the following.
\begin{proposition}\label{localization preserves holonomicity}
     Let $R$ be a  differentiably admissible $K$-algebra. If $M$ is a $\D$-module, then $M_\m$ has a $D(R_\m,K)$-module structure for every maximal ideal $\m$ of $R$. Moreover, $M$ is  holonomic $\D$-module if and only if $M_\m$ is holonomic as $\mathcal{D}(R_\m,K)$-module for each maximal ideal $\m$ of $R$.
\end{proposition}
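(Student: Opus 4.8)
The plan is to obtain the $\D(R_\m,K)$-module structure on $M_\m$ from Theorem \ref{localization at a maximal of diff admissible alg} by a direct limit argument, and then to detect holonomicity through the grade of $M$ over $\D(R,K)$ while exploiting the flatness of $R\to R_\m$; the argument runs parallel to the proof of Proposition \ref{completion preserves holonomicity}, with the flat map $R\to R_\m$ in place of $R\to\wh R$. For the module structure, note that for each $f\in R$ the localization $M_f$ is again a $\D(R,K)$-module and that the canonical maps $M_f\to M_{fg}$ are $\D(R,K)$-linear, so $M_\m=\varinjlim_{f\notin\m}M_f$ is a $\D(R,K)$-module on which every element of $R\setminus\m$ acts invertibly. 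By Theorem \ref{localization at a maximal of diff admissible alg} the canonical map $R_\m\otimes_R\D(R,K)\to\D(R_\m,K)$ is an isomorphism, so $\D(R_\m,K)$ is the localization of $\D(R,K)$ at $R\setminus\m$; hence $M_\m$ automatically carries a $\D(R_\m,K)$-module structure, compatibly with its $R_\m$-structure.

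For the equivalence I would set $n=\dim R$ (so that $\dim R_\m=n$ for every maximal $\m$, since $R$ is equi-codimensional, and $R_\m$ is again differentiably admissible of dimension $n$), and take $M$ to be finitely generated over $\D(R,K)$, as is implicit in the notion of holonomicity. Recall that $\D(R,K)$ is a left and right Noetherian ring of differentiable type with $\dim gr^{\Sigma}(\D(R,K))=2n$, so by \cite[Theorem 1.2.2]{MM-91} a nonzero finitely generated $\D(R,K)$-module $M$ is holonomic if and only if $\grade_{\D(R,K)}(M)=n$, equivalently $\Ext^j_{\D(R,K)}(M,\D(R,K))=0$ for all $j<n$ (the zero module being holonomic by convention); the same criterion applies over $\D(R_\m,K)$. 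The crux of the proof is the base-change isomorphism
\[
\Ext^j_{\D(R,K)}(M,\D(R,K))\otimes_R R_\m\;\cong\;\Ext^j_{\D(R_\m,K)}(M_\m,\D(R_\m,K)),\qquad j\ge 0,
\]
for each maximal ideal $\m$. To prove it I would choose a resolution $F_\bullet\to M$ by finitely generated free $\D(R,K)$-modules (possible because $\D(R,K)$ is Noetherian); since $R\to R_\m$ is flat and $\D(R_\m,K)\cong\D(R,K)\otimes_R R_\m$, the complex $F_\bullet\otimes_R R_\m$ is a resolution of $M_\m$ by finitely generated free $\D(R_\m,K)$-modules, and applying $\Hom(-,\D)$, using the identification $\Hom_{\D(R,K)}(F_j,\D(R,K))\otimes_R R_\m\cong\Hom_{\D(R_\m,K)}(F_j\otimes_R R_\m,\D(R_\m,K))$ for $F_j$ finitely generated free, and using flatness of $R\to R_\m$ to commute cohomology with $\otimes_R R_\m$, yields the stated isomorphism. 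The only delicate point here is keeping careful track of the left and right $\D$-module structures, but every identification used is the canonical one.

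Finally I would assemble the equivalence. If $M$ is holonomic then $\Ext^j_{\D(R,K)}(M,\D(R,K))=0$ for all $j<n$, so localizing and invoking the base-change isomorphism gives $\Ext^j_{\D(R_\m,K)}(M_\m,\D(R_\m,K))=0$ for all $j<n$; hence $\grade_{\D(R_\m,K)}(M_\m)\ge n$, and since $\grade_{\D(R_\m,K)}(M_\m)\le n$ always by the Bernstein inequality, $M_\m$ is holonomic (trivially so when $M_\m=0$). Conversely, if each $M_\m$ is holonomic, then $\Ext^j_{\D(R_\m,K)}(M_\m,\D(R_\m,K))=0$ for all $j<n$ and all $\m$ (trivially when $M_\m=0$, by holonomicity otherwise), so by base change the $R$-module $\Ext^j_{\D(R,K)}(M,\D(R,K))$ vanishes after localizing at every maximal ideal of $R$ and is therefore zero; thus $\grade_{\D(R,K)}(M)=n$ and $M$ is holonomic. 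The step I expect to be the main obstacle is the $\Ext$ base-change isomorphism over the non-commutative ring $\D(R,K)$; once Theorem \ref{localization at a maximal of diff admissible alg} and the flatness of $R\to R_\m$ are in hand, the remainder is a careful passage through finite free resolutions.
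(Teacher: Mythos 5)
Your proof is correct, but it takes a genuinely different route from the paper on the second assertion. For the $\D(R_\m,K)$-module structure the two arguments are essentially the same: both rest on Theorem \ref{localization at a maximal of diff admissible alg}, the paper via the identification $\D(R_\m,K)\otimes_{\D}M\cong R_\m\otimes_R M$, you via the observation that $\D(R_\m,K)$ is the (Ore) localization of $\D(R,K)$ at $R\setminus\m$ and that $M_\m=\varinjlim_{f\notin\m}M_f$ is a $\D(R,K)$-module on which $R\setminus\m$ acts invertibly. For the holonomicity equivalence, however, the paper simply cites \cite[Proposition 3.2.1.7]{Mac-14}, whereas you reprove that result from scratch: you use the grade characterization of holonomicity coming from \cite[Theorem 1.2.2]{MM-91} (recorded in the paper's preliminaries), a flat base-change isomorphism for $\Ext^j_{\D(R,K)}(M,\D(R,K))$ along $\D(R,K)\to\D(R_\m,K)$ obtained from a finite free resolution over the Noetherian ring $\D(R,K)$, and the local-global principle that an $R$-module vanishing at every maximal ideal is zero; equi-codimensionality guarantees the Bernstein number $n$ is the same for every $\m$, so the grades match up. This is precisely the kind of argument underlying the cited result, so your version buys self-containedness and makes explicit which inputs are used (two-sidedness of the isomorphism in Theorem \ref{localization at a maximal of diff admissible alg}, Noetherianity of $\D(R,K)$, equi-codimensionality), at the cost of length; the paper's citation is shorter and parallels how it handles completion in Proposition \ref{completion preserves holonomicity}. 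One caveat you rightly flag: your Ext argument needs $M$ finitely generated over $\D(R,K)$. That is built into ``holonomic'' for the forward direction, but for the converse it is a genuine hypothesis (for a non-finitely-generated $M$, e.g.\ an infinite direct sum of local cohomology modules supported at distinct maximal ideals, every $M_\m$ can be holonomic while $M$ is not); the cited proposition carries the same restriction, and only the finitely generated case is used later in the paper, so this does not affect the applications.
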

\begin{proof}
By \ref{localization at a maximal of diff admissible alg}, we have $\mathcal{D}(R_{\m},K)\cong R_{\m}\otimes_R \mathcal{D}(R,K)$. 

Now
\begin{align*}
     \mathcal{D}(R_{\m},K)\otimes_{\mathcal{D}}M &= ( R_{\m}\otimes_R \mathcal{D}(R,K))\otimes_{\mathcal{D}} M \\&=  R_{\m}\otimes_R (\mathcal{D}(R,K)\otimes_{\mathcal{D}} M)\\&= R_\m\otimes_R M
\end{align*}
The structure of $\mathcal{D}(R_{\m},K)$ on $\mathcal{D}(R_{\m},K)\otimes_{\mathcal{D}}M$ gives the desired action on $R_\m\otimes_R M$. The fact that $M$ is holonomic $\D$-module if and only if $M_\m$ is holonomic as $\mathcal{D}(R_\m,K)$-module for each maximal ideal $\m$ of $R$ follows from \cite[Proposition 3.2.1.7]{Mac-14}.
\end{proof}
 We review a result due to M. Dorreh \cite[Lemma 3.2]{MD-16}.
\begin{lemma}\label{Dorreh result regarding completion}
     Let $(R,\m)$ be a regular local ring of dimension $d$ which contains a field of characteristic $0$. Assume that $P\in \Spec(R)$ such that $\height P\leq d-2$. Then $E(R/P)\otimes_R \widehat{R}$ is not holonomic $\D$-module.
\end{lemma}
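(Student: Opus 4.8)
The plan is to prove the sharper statement that $N:=E(R/P)\otimes_R\widehat{R}$ is not even finitely generated over $\D$; since a holonomic $\D$-module is, by definition, finitely generated, this suffices.

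The first step is to identify $N$ concretely. Put $h:=\height P$. As $R_P$ is regular local of dimension $h$, one has $E_R(R/P)=E_{R_P}(\kappa(P))=H^h_{PR_P}(R_P)$, and since $R_P\to R_P\otimes_R\widehat{R}=S^{-1}\widehat{R}$ (where $S:=R\setminus P$) is flat, flat base change yields $N=H^h_{PR_P}(R_P)\otimes_{R_P}S^{-1}\widehat{R}=S^{-1}M_0$, the localization at $S$ of the $\D$-module $M_0:=H^h_{P\widehat{R}}(\widehat{R})$. Two remarks about this presentation: (i) for each $s\in S$, the $\widehat{R}$-submodule $M_0[1/s]$ of $S^{-1}M_0=N$ is actually a $\D$-submodule, since it is stable under $\widehat{R}$ and, by the quotient rule $\partial(m/s^k)=\partial(m)/s^k-km\,\partial(s)/s^{k+1}$, under every $K$-linear derivation of $\widehat{R}$; and (ii) any finite subset of $S^{-1}M_0$ lies in a single $M_0[1/s]$. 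Together these give: if $N$ were finitely generated over $\D$, then $N=M_0[1/s]$ for some $s\in S$, and in particular $M_0[1/s]$ would be $s'$-divisible for every $s'\in S$, because on $S^{-1}M_0$ every element of $S$ acts invertibly.

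The main work is to refute this last conclusion, and this is the only place the hypothesis $\height P\le d-2$ is used. Fix $s\in S$. Since $R/P$ is a local domain of dimension $d-h\ge2$, prime avoidance in $R/P$ provides $s'\in\m\setminus P$ whose residue avoids all the (finitely many) height-one primes of $R/P$ containing the nonzero residue of $s$; hence $s\notin\sqrt{(P,s')R}$, so we may pick a minimal prime $\p_0$ of $(P,s')R$ with $s\notin\p_0$, and $\height\p_0=h+1$ by Krull's principal ideal theorem. Let $\q$ be a minimal prime of $\p_0\widehat{R}$. Flatness of $R\to\widehat{R}$ gives $\q\cap R=\p_0$, $\height\q=h+1$, and (a routine check) $\q$ is also minimal over $(P,s')\widehat{R}$; moreover $P\widehat{R}\subseteq\q$ and $s\notin\q$. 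Here the inequality $h+1\le d-1<d$ is essential, as it guarantees $\q\ne\m\widehat{R}$ (for $\height P=d-1$ one would be forced to take $\q=\m\widehat{R}$, and then one cannot arrange $s\notin\q$). Now localize at $\q$: $\widehat{R}_{\q}$ is regular local of dimension $h+1$, the ideal $P\widehat{R}_{\q}$ has height $h$, and $\dim\widehat{R}_{\q}/(P,s')\widehat{R}_{\q}=0$. The long exact local cohomology sequence of $0\to\widehat{R}_{\q}\xrightarrow{s'}\widehat{R}_{\q}\to\widehat{R}_{\q}/s'\widehat{R}_{\q}\to0$ identifies the cokernel of multiplication by $s'$ on $H^h_{P\widehat{R}_{\q}}(\widehat{R}_{\q})$ with the kernel of $H^h_{P\widehat{R}_{\q}}(\widehat{R}_{\q}/s'\widehat{R}_{\q})\to H^{h+1}_{P\widehat{R}_{\q}}(\widehat{R}_{\q})$. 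The target is zero by the Hartshorne--Lichtenbaum vanishing theorem, because $\dim\widehat{R}_{\q}/P\widehat{R}_{\q}=1>0$; the source is the top local cohomology of the $h$-dimensional local ring $\widehat{R}_{\q}/s'\widehat{R}_{\q}$ taken with respect to an ideal whose radical is the maximal ideal, hence is nonzero by Grothendieck's non-vanishing theorem. Thus $s'$ does not act surjectively on $H^h_{P\widehat{R}_{\q}}(\widehat{R}_{\q})=(M_0)_{\q}$, so $(M_0/s'M_0)_{\q}\ne0$; since $s\notin\q$, this forces $M_0[1/s]/s'M_0[1/s]\ne0$, contradicting $s'$-divisibility. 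Hence $N$ is not finitely generated over $\D$, a fortiori not holonomic.

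The step I expect to require the most care is the prime-theoretic part of the third paragraph: passing between primes of $R$ and of $\widehat{R}$ through the formal fibres, coping with possible embedded primes of $P\widehat{R}$, and verifying that $s'$ and $\q$ satisfy all the asserted properties. The cohomological input, by contrast, is a direct application of standard vanishing theorems. Finally, since $R$ is not assumed complete, one should consistently work with $\D=\D(\widehat{R},K)$ and with $\widehat{R}$-module structures throughout.
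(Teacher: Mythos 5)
A preliminary remark on the comparison: the paper does not prove this lemma at all --- it is quoted from Dorreh \cite[Lemma 3.2]{MD-16} --- so your argument has to stand on its own, and most of it does. The identification $E(R/P)\otimes_R\widehat{R}\cong S^{-1}M_0$ with $M_0=H^h_{P\widehat{R}}(\widehat{R})$ and $S=R\setminus P$, the observation that each $M_0[1/s]$ is a $\D$-submodule so that finite generation over $\D$ would force $N=M_0[1/s]$ for a single $s$ and hence $s'$-divisibility for all $s'\in S$, the construction of $\p_0$ and $\q$ with $\height\q=h+1$, $s\notin\q$ and $\sqrt{(P,s')\widehat{R}_\q}=\q\widehat{R}_\q$ (two small caveats: $\height\p_0=h+1$ comes from catenarity of the regular local ring $R$ rather than Krull's principal ideal theorem as such, and Hartshorne--Lichtenbaum must be invoked in its general form since $\widehat{R}_\q$ is not complete), and the conclusion that the cokernel of $s'$ on $(M_0)_\q$ is $H^h_{\q\widehat{R}_\q}(\widehat{R}_\q/s'\widehat{R}_\q)\neq 0$ --- all of this is correct.

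The gap is the very last implication. Your $M_0[1/s]$ is by definition the image of $(M_0)_s$ in $S^{-1}M_0$, i.e.\ $(M_0)_s$ modulo its $S$-torsion, and what must be refuted is $s'$-divisibility of this image; but the cohomological computation only shows that $s'$ fails to be surjective on $(M_0)_\q$. The sentence ``since $s\notin\q$, this forces $M_0[1/s]/s'M_0[1/s]\neq 0$'' silently identifies $M_0[1/s]$ with $(M_0)_s$, i.e.\ assumes that $(M_0)_s\to S^{-1}M_0$ is injective, which amounts to $H^h_{P\widehat{R}}(\widehat{R})$ having no associated prime meeting $S$. Without this, writing $\Gamma$ for the kernel of $M_0\to S^{-1}M_0$, the exact sequence $\Gamma_\q/s'\Gamma_\q\to (M_0)_\q/s'(M_0)_\q\to (M_0[1/s])_\q/s'(M_0[1/s])_\q\to 0$ does not let you conclude that the last term is nonzero; note that the cokernel you computed is a module over $\widehat{R}_\q/s'\widehat{R}_\q$ and hence is itself killed by $s'\in S$, so the possibility that it is absorbed by $S$-torsion cannot be dismissed out of hand. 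The missing statement is true --- $\Ass_{\widehat{R}}H^h_{P\widehat{R}}(\widehat{R})$ consists exactly of the minimal primes of $P\widehat{R}$, all of which contract to $P$ --- but it is a genuine extra input: for instance via $\Ass H^h_{P\widehat{R}}(\widehat{R})=\Ass\Ext^h_{\widehat{R}}(\widehat{R}/P\widehat{R},\widehat{R})$ at the grade spot, unmixedness of the canonical module of $\widehat{R}/P\widehat{R}$, and Ratliff's theorem that $\widehat{R/P}$ is equidimensional because $R/P$ is universally catenary. Alternatively you can sidestep it by strengthening the prime-avoidance choice of $s'$ so that it also avoids the contractions of the finitely many associated primes of $M_0$ that meet $S$ and have height $h+1$ (these are non-maximal since $h+1\leq d-1$, and finiteness of $\Ass M_0$ over the complete regular local ring $\widehat{R}$ is Lyubeznik's theorem); then $\q$, having height $h+1$, cannot lie in the support of $\Gamma$, so $\Gamma_\q=0$ and your argument closes. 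Either repair works, but as written the proof is incomplete at precisely this point.
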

As a consequence of this Lemma we prove that $E(R/P)$ is not holonomic when $R$ is a differentiably admissible $K$-algebra of dimension $d$ and $P\in \Spec(R)$ with $\height P\leq d-2$.
\begin{lemma}\label{E is not holo for diff admiss alg}
    Let $R$ be a differentiably admissible $K$-algebra of dimension $d$. If $P\in \Spec(R)$ is such that $\height P\leq d-2$, then $E_R(R/P)$ is not holonomic $\D$-module.
\end{lemma}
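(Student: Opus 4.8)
The plan is to reduce the statement to M.\ Dorreh's Lemma \ref{Dorreh result regarding completion} by passing first to a localization of $R$ at a maximal ideal and then to its completion. Fix a maximal ideal $\m$ of $R$ with $P\subseteq \m$. Since $R$ is equi-codimensional of dimension $d$, the localization $R_\m$ is a regular local ring of dimension $d$ containing the field $K$ of characteristic $0$, and it is again a (local) differentiably admissible $K$-algebra. By Proposition \ref{localization preserves holonomicity}, $E_R(R/P)$ is holonomic as a $\D$-module if and only if $\bigl(E_R(R/P)\bigr)_\m$ is holonomic as a $\D(R_\m,K)$-module. The standard compatibility of injective hulls with localization in Matlis theory gives $\bigl(E_R(R/P)\bigr)_\m\cong E_{R_\m}(R_\m/PR_\m)$, since $P\subseteq \m$. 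Hence it suffices to show that $E_{R_\m}(R_\m/PR_\m)$ is not holonomic over $\D(R_\m,K)$.

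Next I would apply Proposition \ref{completion preserves holonomicity} to the local differentiably admissible $K$-algebra $(R_\m,\m R_\m,K)$: the $\D$-module $E_{R_\m}(R_\m/PR_\m)$ is holonomic over $\D(R_\m,K)$ if and only if $\widehat{R_\m}\otimes_{R_\m}E_{R_\m}(R_\m/PR_\m)$ is holonomic over $\D(\widehat{R_\m},K)$. But $R_\m$ is a regular local ring of dimension $d$ containing a field of characteristic $0$, $PR_\m\in\Spec(R_\m)$, and $\height PR_\m=\height P\le d-2$; so Lemma \ref{Dorreh result regarding completion} applies verbatim and tells us precisely that $E_{R_\m}(R_\m/PR_\m)\otimes_{R_\m}\widehat{R_\m}$ is not holonomic. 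Combining the last two equivalences, $E_{R_\m}(R_\m/PR_\m)$ is not holonomic over $\D(R_\m,K)$, and therefore $E_R(R/P)$ is not holonomic as a $\D$-module.

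I expect the only delicate points to be bookkeeping rather than substance: checking that the $\D(R,K)$-module structure on $E_R(R/P)$ localizes to the $\D(R_\m,K)$-module structure produced via Theorem \ref{localization at a maximal of diff admissible alg} and Proposition \ref{localization preserves holonomicity} (so that holonomicity is measured consistently before and after localizing), and that the $\D(\widehat{R_\m},K)$-module appearing in Proposition \ref{completion preserves holonomicity} is literally the object whose non-holonomicity is asserted in Dorreh's lemma. Both are routine once the definitions are unwound, and no new idea beyond the two-step reduction is required.
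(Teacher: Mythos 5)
Your proposal is correct and follows essentially the same route as the paper: localize at a maximal ideal $\m\supseteq P$ (using Proposition \ref{localization preserves holonomicity} and equi-codimensionality to get $\dim R_\m=d$), pass to the completion via Proposition \ref{completion preserves holonomicity}, and conclude with Lemma \ref{Dorreh result regarding completion}; the paper phrases it as a contradiction while you run the equivalences directly, which is only a cosmetic difference.
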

\begin{proof}
    Assume that $E_R(R/P)$ is a holonomic $\D(R,K)$-module for some $P\in \Spec(R)$ such that $\height P\leq d-2$.  Let $\m$ be a maximal ideal of $R$ containing $P$. Then by Proposition \ref{localization preserves holonomicity}, $E(R_\m/PR_\m)=E_R(R/P)\otimes_R R_\m$ is holonomic as $\D(R_\m,K)$-module. Again by Proposition \ref{completion preserves holonomicity}, $E(R_\m/PR_\m)\otimes_{R_\m}\widehat{R_\m}$ is holonomic as $\D(\widehat{R_\m},F)$-module where $\widehat{R_\m}=F[[x_1,\ldots,x_d]]$ by Cohen structure theorem. Since $R$ is a differentiably admissible $K$-algebra, height of each maximal ideal of $R$ is same and hence $\dim R_\m=\height \m=\dim R$.  Since $\height (PR_\m)=\height(P)\leq d-2$, we arrive at a contradiction by Lemma \ref{Dorreh result regarding completion}.
    \end{proof}
As an immediate corollary, we obtain the following result.
\begin{corollary}\label{height id bigger than equal to d-1}
     Let $R$ be a differentiably admissible $K$-algebra of dimension $d$. Let $\mathcal{T}$ be a Lyubeznik functor on $\Mod(R)$ and $\injdim_R \mathcal{T}(R)=c$. If $\mu_c(P,\mathcal{T}(R))>0$ for some prime ideal $P$ of $R$, then $\height P\geq d-1$.
\end{corollary}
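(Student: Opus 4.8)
The plan is to deduce this from Lemma~\ref{E is not holo for diff admiss alg} by inspecting the minimal injective resolution of $\mathcal{T}(R)$. First, $\mathcal{T}(R)$ is a holonomic $\D(R,K)$-module: $R$ itself is holonomic, local cohomology preserves holonomicity over a differentiably admissible $K$-algebra, and Lyubeznik functors are built by iterating operations of that type; this is precisely the mechanism behind Theorem~\ref{finite associated prime for diff adm alg}, worked out in \cite{NB-13} and \cite{ACR-17}. By the same circle of ideas (Lyubeznik's results transported to differentiably admissible algebras via \cite{ACR-17}), all Bass numbers of $\mathcal{T}(R)$ are finite and $\injdim_R\mathcal{T}(R)\le\dim_R(\Supp_R\mathcal{T}(R))$. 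Now write the minimal injective resolution as $0\to\mathcal{T}(R)\to E^0\to\cdots\to E^c\to 0$, so that $c=\injdim_R\mathcal{T}(R)$ and the top term decomposes as $E^c=\bigoplus_Q E_R(R/Q)^{\mu_c(Q,\mathcal{T}(R))}$. If $\mu_c(P,\mathcal{T}(R))>0$ then $E_R(R/P)$ is a direct summand of $E^c$; granting that this summand is a holonomic $\D$-module, Lemma~\ref{E is not holo for diff admiss alg} immediately forces $\height P\ge d-1$, which is the claim. So the whole content is the assertion $(\star)$: \emph{every injective hull occurring in the top term $E^c$ of the minimal injective resolution of a holonomic $\D$-module is itself holonomic.}

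To prove $(\star)$ I would descend to the complete local case. Fix a maximal ideal $\m\supseteq P$. Localizing at $\m$ preserves Bass numbers and does not raise injective dimension, so by Proposition~\ref{localization preserves holonomicity} the module $\mathcal{T}(R)_\m$ is holonomic over $\D(R_\m,K)$ with $\injdim_{R_\m}\mathcal{T}(R)_\m=c$ and $\mu_c(PR_\m,\mathcal{T}(R)_\m)>0$. Completing at $\m$ and applying Proposition~\ref{completion preserves holonomicity} together with the Cohen structure theorem (note $\dim R_\m=d$ by equicodimensionality), we obtain a holonomic $\D(S,F)$-module over $S=F[[x_1,\dots,x_d]]$ which again has injective dimension $c$ and a positive Bass number in cohomological degree $c$ at each minimal prime of $PS$; such a prime has height $\height P$ because $R_\m\to S$ is flat with regular fibres. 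Via the isomorphism $E_R(R/P)\otimes_R\widehat{R_\m}\cong\bigoplus E_S(S/\widehat P)$ and Propositions~\ref{localization preserves holonomicity} and~\ref{completion preserves holonomicity}, this reduces $(\star)$ for $\mathcal{T}(R)$ to the corresponding statement for holonomic $\D$-modules over a formal power series ring over a field of characteristic $0$, which is part of the finiteness theory of holonomic $\D$-modules over complete regular local rings (Lyubeznik, and Switala--Zhang \cite{SZ-19}); combined with Lemma~\ref{Dorreh result regarding completion} it turns directly into the bound $\height\widehat P\ge d-1$, hence $\height P\ge d-1$.

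I expect the main obstacle to be exactly $(\star)$, the holonomicity of the injective hulls in the \emph{last} term. This is genuinely a statement about the top of the resolution: lower terms of the minimal injective resolution of a holonomic $\D$-module routinely contain non-holonomic injective hulls — for instance $E_R(R/\mathfrak q)$ for a non-maximal prime $\mathfrak q$ can appear in degrees below $c$ — so no general principle makes every term holonomic, and the argument must use that $E^c$ is the last nonzero term. The Cohen-theoretic reduction above is the device that allows one to invoke the known structure theory over complete regular local rings; once $(\star)$ is available the conclusion $\height P\ge d-1$ is immediate from Lemma~\ref{E is not holo for diff admiss alg}.
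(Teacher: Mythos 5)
You correctly isolate the crux: the whole corollary reduces to showing that $E_R(R/P)$ is holonomic whenever $\mu_c(P,\mathcal{T}(R))>0$ with $c=\injdim_R\mathcal{T}(R)$, after which Lemma \ref{E is not holo for diff admiss alg} gives $\height P\ge d-1$. That much agrees with the paper. The gap is in your proof of $(\star)$. Your plan is to localize at $\m\supseteq P$, complete, and quote the formal-power-series case from \cite{SZ-19}; but the reduction does not go through, because injective dimension and ``top-spot'' Bass numbers are not preserved by $R_\m\to\widehat{R_\m}$ for modules that are not finitely generated. The formal fibre of the completion map over a \emph{non-maximal} prime $P$ can be positive-dimensional, and by the base-change behaviour of Bass numbers a prime $\q$ of $\widehat{R_\m}$ lying over $P$ with $\height(\q/P\widehat{R_\m})=h>0$ produces nonzero Bass numbers of $\widehat{M}=M\otimes_{R_\m}\widehat{R_\m}$ in degree $c+h$. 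Hence $\injdim_{\widehat{R_\m}}\widehat{M}$ may strictly exceed $c$, and the degree-$c$ Bass number at a minimal prime of $P\widehat{R_\m}$ need no longer sit in the last term of the minimal injective resolution of $\widehat M$, so the Switala--Zhang result cannot be invoked there. Relatedly, the isomorphism $E_R(R/P)\otimes_R\widehat{R_\m}\cong\bigoplus E_S(S/\widehat P)$ is false in general: already for $P=(0)$ one has $E_R(R/(0))\otimes_R\widehat R=\operatorname{Frac}(R)\otimes_R\widehat R$, a localization of $\widehat R$ at the generic formal fibre, which has positive injective dimension over $\widehat R$. This is precisely why Lemma \ref{Dorreh result regarding completion} is stated for the module $E(R/P)\otimes_R\widehat R$ itself rather than for injective hulls over $\widehat R$.

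The paper closes this gap without completing at all at this stage: from $\mu_c(P,M)=\mu_0(P,H^c_P(M))$ one gets $P\in\Ass_R H^c_P(M)$; finiteness of associated primes (Theorem \ref{finite associated prime for diff adm alg}) allows one to strip off the embedded primes via an exact sequence $0\to\Gamma_{Q_1\cdots Q_s}(H^c_P(M))\to H^c_P(M)\to N\to 0$ with $\Ass_R N=\{P\}$ and $N=\mathcal F(R)$ for a Lyubeznik functor $\mathcal F$; the vanishing $H^{c+1}_{(P,f)}(M)=0$ forced by $\injdim_R M=c$ makes $N\to N_f$ surjective, and $\Ass_R N=\{P\}$ makes it injective, for every $f\notin P$, whence $N=N_P=E(R/P)^t$ is holonomic because $\mathcal F(R)$ is. You would need to supply an argument of this kind (or a completion argument that genuinely controls the formal fibres) before your appeal to Lemma \ref{E is not holo for diff admiss alg} is justified.
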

\begin{proof}
Set $M=\mathcal{T}(R)$. Note that $\mu_c(P,M)=\mu_0(P,H^c_P(M))$, see \cite[Lemma 1.4, Theorem 3.4 (b)]{Lyu-93}. If possible assume that $\height P\leq d-2$. If $P\notin \Ass H^c_P(M)$, then $\mu_0(P, H^c_P(M))=0$, a contradiction. Since by Theorem \ref{finite associated prime for diff adm alg}, $\Ass_R\mathcal{T}(R)$ is finite, let us assume that $\Ass H^c_P(M)=\{P,Q_1,\ldots,Q_s\}$ where $P\subsetneq Q_i$ for all $i$. Consider the exact sequence
 $$0\rightarrow \Gamma_{Q_1\cdots Q_s}(H^c_P(M))\rightarrow H^c_P(M)\rightarrow N\rightarrow 0.$$
 Note that $N=\mathcal{F}(R)$ for some Lyubeznik functor $\mathcal{F}$ and $\Ass N=\{P\}$. Let $f\in R\setminus P$. Since $\injdim_R M=c$, we have the following exact sequence 
 $$H^c_{(P,f)}(M)\rightarrow H^c_P(M)\rightarrow H^c_P(M)_f\rightarrow H^{c+1}_{(P,f)}(M)=0.$$ This implies that the natural map $ H^c_P(M)\rightarrow H^c_P(M)_f$ is surjective and hence the natural map from $N\rightarrow N_f$ is surjective. Moreover, the fact that $\Ass N=\{P\}$ and $f\in R\setminus P$ implies the map $N\rightarrow N_f$ is injective as well. Thus, $N=N_f$ for all $f\in R\setminus P$. Therefore, $N=N_P=\mathcal{F}(R_P)=E(R_P/PR_P)^t=E(R/P)^t$ for some finite $t>0$, see \cite[Theorem 3.4 (b)]{Lyu-93}. This implies that $E(R/P)$ is holonomic as $\D$-module. This is a contradiction by Lemma \ref{E is not holo for diff admiss alg}.
\end{proof}
Having established the antecedent lemmas and propositions, we are now ready to present the proof of \hyperref[P is maximal ideal at inj dim]{Theorem A}. For the reader's convenience, we restate the theorem below.
\begin{theorem}\label{P is maixmal ideal main th}
Let $R$ be a regular affine domain over a field $K$ of characteristic $0$. Let $\mathcal{T}$ be a Lyubeznik functor on $\Mod(R)$ and $\injdim_R \mathcal{T}(R)=c$. If $\mu_c(P,\mathcal{T}(R))>0$ for some prime ideal $P$ of $R$, then $P$ is a maximal ideal of $R$.
\end{theorem}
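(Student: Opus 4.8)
The plan is to combine Corollary \ref{height id bigger than equal to d-1} with the special geometry of affine domains, the only extra work being to rule out the coheight-one case.

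First I would note that, being an equidimensional regular finitely generated $K$-algebra, $R$ is differentiably admissible, so Corollary \ref{height id bigger than equal to d-1} applies and gives $\height P\geq d-1$, where $d=\dim R$. Since $R$ is an affine domain over $K$ it is catenary and equidimensional, so $\height P+\dim_R(R/P)=d$; hence $\dim_R(R/P)\leq 1$, and if $\dim_R(R/P)=0$ then $P$ is maximal and we are done. From now on I would assume, towards a contradiction, that $\dim_R(R/P)=1$, equivalently $\height P=d-1$.

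Next I would rerun the argument from the proof of Corollary \ref{height id bigger than equal to d-1}. Writing $M=\mathcal{T}(R)$, we have $\mu_c(P,M)=\mu_0(P,H^c_P(M))>0$, so $P\in\Ass_RH^c_P(M)$; by Theorem \ref{finite associated prime for diff adm alg} this set is finite, say $\Ass_RH^c_P(M)=\{P,Q_1,\dots,Q_s\}$ with $P\subsetneq Q_i$, and I set $N=H^c_P(M)/\Gamma_{Q_1\cdots Q_s}(H^c_P(M))$, a holonomic $\D(R,K)$-module (a quotient of a Lyubeznik functor applied to $R$, cf. \cite{Lyu-93}) with $\Ass_RN=\{P\}$. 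As in that proof, $\injdim_RM=c$ forces the natural map $N\to N_f$ to be an isomorphism for every $f\in R\setminus P$, so $N=N_P$; and $N_P$ is an injective $R_P$-module (Lyubeznik's bound, since $\dim_{R_P}\Supp_{R_P}N_P=0$) with $\Ass_{R_P}N_P=\{PR_P\}$ and finite Bass number $\mu_0(PR_P,N_P)$, whence $N_P\cong E_{R_P}(k(P))^{t}=E_R(R/P)^{t}$ for some finite $t\geq 1$ (see \cite[Theorem 3.4 (b)]{Lyu-93}). In particular $E_R(R/P)$ is a direct summand of $N$, hence a holonomic, and a fortiori finitely generated, $\D(R,K)$-module.

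The remaining — and main — task is to contradict this: I claim that when $\dim_R(R/P)=1$ the module $E_R(R/P)$ is not finitely generated over $\D(R,K)$, and this is exactly where the hypothesis that $R$ is an affine domain enters. Since $E_R(R/P)=E_{R_P}(k(P))$ carries a natural $R_P$-module structure, multiplication by any $f\in R\setminus P$ is an automorphism of it; so localizing at $f$ leaves $E_R(R/P)$ unchanged as a module, only the acting ring changing, and $E_R(R/P)$ is finitely generated over $\D(R,K)$ iff it is finitely generated over $\D(R_f,K)=\D(R,K)_f$. By generic smoothness (characteristic $0$) I may choose $f\in R\setminus P$ so that $C:=R_f/PR_f$ is smooth over $K$ and $\Spec C\hookrightarrow\Spec R_f$ is a closed immersion of smooth affine $K$-varieties. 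Then Kashiwara's equivalence identifies the $\D(R_f,K)$-modules supported on $\Spec C$ with the $\D(C,K)$-modules, compatibly with finite generation; under it $E_R(R/P)$ corresponds to the $\D(C,K)$-module $(0:_{E_R(R/P)}P)=\Hom_{R_f}(C,E_R(R/P))\cong\operatorname{Frac}(R/P)$. So it suffices to show $\operatorname{Frac}(R/P)$ is not finitely generated over $\D(C,K)$. But $C$ is a one-dimensional affine domain over $K$, so it has infinitely many maximal ideals; given $\xi_1,\dots,\xi_m\in\operatorname{Frac}(C)$, write $\xi_i=a_i/b_i$ with $a_i,b_i\in C$, $b_i\neq 0$, and note that every differential operator on $C$ extends to one on the localization $C_{b_1\cdots b_m}$ and therefore preserves it, so $\sum_i\D(C,K)\xi_i\subseteq C_{b_1\cdots b_m}$; since the zero locus of $b_1\cdots b_m$ in $\Spec C$ is finite, there is a maximal ideal $\mathfrak n$ and $0\neq h\in\mathfrak n$ with $b_1\cdots b_m\notin\mathfrak n$, and then $1/h\in\operatorname{Frac}(C)\setminus C_{b_1\cdots b_m}$. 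Hence no finite set generates $\operatorname{Frac}(C)$ over $\D(C,K)$, contradicting the previous paragraph. Therefore $\dim_R(R/P)=0$, i.e. $P$ is maximal.

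I expect the delicate step to be the third paragraph — extracting the $\D$-module structure of $E_R(R/P)$. Kashiwara's equivalence settles it cleanly once $\Spec(R/P)$ is smooth, but to remain inside the differentiably admissible framework one can instead argue by hand: after localizing at $f$ one obtains a transverse coordinate system $y_1,\dots,y_{d-1}$ cutting out $\Spec(R/P)$ together with commuting derivations $\partial_1,\dots,\partial_d\in\Der_K(R_f)$, identifies $E_R(R/P)$ with the inverse-polynomial module $\bigoplus_{\alpha}\operatorname{Frac}(R/P)\cdot y^{-\alpha}$, and checks directly that the action of $\D(R_f,K)$ cannot enlarge the finitely many denominators occurring in a finite generating set. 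This is precisely the obstruction coming from $R/P$ having infinitely many maximal ideals, and it explains why the conclusion genuinely requires $R$ to be an affine domain rather than merely differentiably admissible — it already fails for $R=K[[y]]$, where $E_R(R/(0))=K((y))$ is holonomic over $\D(R,K)$.
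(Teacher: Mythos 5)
Your proposal is correct, and up to the point where you conclude that $E_R(R/P)$ is holonomic over $\D(R,K)$ it coincides with the paper's argument (same reduction via $\height P\geq d-1$, same module $N$, same localization trick). Where you diverge is in how you derive the contradiction from holonomicity of $E_R(R/P)$ when $\dim R/P=1$. The paper stays inside the injective resolution of $R$: it uses the exact sequence $0\to H^{d-1}_P(R)\to E(R/P)\to\bigoplus_{P\subseteq\m}E(R/\m)\to 0$ coming from the minimal injective resolution of the regular ring $R$, notes that a holonomic quotient has only finitely many associated primes, and contradicts the fact (Noether normalization) that the one-dimensional affine domain $R/P$ has infinitely many maximal ideals. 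You instead invoke generic smoothness and Kashiwara's equivalence to transport $E_R(R/P)$ to the $\D(C,K)$-module $\operatorname{Frac}(C)$ for $C=R_f/PR_f$, and then show directly that $\operatorname{Frac}(C)$ is not finitely generated because differential operators preserve the localizations $C_{b_1\cdots b_m}$ while $C$ has infinitely many maximal ideals. Both routes ultimately rest on the same geometric input --- infinitely many closed points on the curve $\Spec(R/P)$ --- but the paper's version is more elementary (no Kashiwara, no smoothness of $R/P$ needed, just the shape of $\operatorname{inj.res}(R)$ over a regular ring), whereas yours isolates more transparently \emph{which} $\D$-module fails to be coherent and why, and your closing remark that $E_{K[[y]]}(K((y)))$ \emph{is} holonomic correctly explains why the affine hypothesis, and not mere differentiable admissibility, is essential. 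One small caution: you assert that finite generation over $\D(R,K)$ is equivalent to finite generation over $\D(R_f,K)$; only the forward implication is clear (and it is all you use), so the ``iff'' should be weakened to ``implies.''
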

\begin{proof}
Set $M=\mathcal{T}(R)$. Note that $\mu_c(P,M)=\mu_0(P,H^c_P(M))$, see \cite[Lemma 1.4, Theorem 3.4 (b)]{Lyu-93}. If possible let $P$ is not a maximal ideal of $R$. Therefore, $\height P=d-1$ by Corollary \ref{height id bigger than equal to d-1}. 

If $P\notin \Ass H^c_P(M)$, then $\mu_0(P, H^c_P(M))=0$, a contradiction. Therefore, let us assume that $\Ass H^c_P(M)=\{P,\m_1,\ldots,\m_s\}$ where each $\m_i$ is a maximal ideal of $R$. Consider the exact sequence
 $$0\rightarrow \Gamma_{\m_1\cdots \m_s}(H^c_P(M))\rightarrow H^c_P(M)\rightarrow N\rightarrow 0.$$
 Note that $N=\mathcal{F}(R)$ for some Lyubeznik functor $\mathcal{F}$ and $\Ass N=\{P\}$. Let $f\in R\setminus P$. Since $\injdim_R M=c$, we have the following exact sequence
 $$H^c_{(P,f)}(M)\rightarrow H^c_P(M)\rightarrow H^c_P(M)_f\rightarrow H^{c+1}_{(P,f)}(M)=0.$$ This implies that the natural map $ H^c_P(M)\rightarrow H^c_P(M)_f$ is surjective and hence the natural map from $N\rightarrow N_f$ is surjective. Since  $\Ass N=\{P\}$ and $f\in R\setminus P$, the map $N\rightarrow N_f$ is injective as well. Since this is true for all $f\in R\setminus P$, $N=N_P=\mathcal{F}(R_P)=E(R_P/PR_P)^t=E(R/P)^t$ for some finite $t>0$, see \cite[Theorem 3.4 (b)]{Lyu-93}. Thus, $E(R/P)$ is a holonomic as $\D$-module. Consider the following exact sequence in the category of $\D$-modules $$
    0\rightarrow H^{d-1}_P(R)\rightarrow E(R/P)\rightarrow \bigoplus_{P\subseteq \m}E(R/\m)\rightarrow 0.
    $$
    Since $E(R/P)$ is holonomic as $\D$-module, $\bigoplus_{P\subseteq \m}E(R/\m)$ is also holonomic as $\D$-module. Therefore, there are only finitely many maximal ideals of $R$ containing $P$. Since $R/P$ is a finitely generated $K$-algebra of dimension $1$, by  Noether normalization we have an integral extension $K[z]\hookrightarrow R/P$. Now $K[z]$ has infinitely many maximal ideals, so does $R/P$.  This implies that there are infinitely many maximal ideals of $R$ containing $P$. This leads to a contradiction.
\end{proof}
As a consequence of the theorem just proved,  we now prove \hyperref[Corollary about inj dim]{Corollary B} which shows that the equality holds of the injective dimension and the support dimension $\mathcal{T}(R)$ where $R$ is a regular affine domain.
\begin{corollary}\label{proof of the corollary}
Let $R$ be a regular affine domain over a field $K$ of characteristic $0$. Let $\mathcal{T}$ be a Lyubeznik functor on $\Mod(R)$. Then $\injdim_R\mathcal{T}(R)=\dim_R(\Supp_R\mathcal{T}(R))$.    
\end{corollary}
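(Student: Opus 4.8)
Only the inequality $\injdim_R\mathcal{T}(R)\ge\dim_R(\Supp_R\mathcal{T}(R))$ requires proof, the reverse being \cite[Theorem 3.4]{Lyu-93}. Write $M=\mathcal{T}(R)$, $c=\injdim_R M$ and $s=\dim_R(\Supp_R M)$, so that $c\le s$. I would deduce $c\ge s$ from the existence of one maximal ideal $\m$ with $H^s_{\m}(M)\ne0$: for such an $\m$, $H^s_{\m}(M)$ is a nonzero $\m$-torsion module, so $\mu_s(\m,M)=\mu_0(\m,H^s_{\m}(M))>0$ and hence $\injdim_R M\ge s$. (Theorem \ref{P is maixmal ideal main th} fits in here too: since the top Bass number of $M$ is carried by maximal ideals, $c=\sup\{\,i:H^i_{\m}(M)\ne0\text{ for some maximal }\m\,\}$, while $H^i_{\m}(M)=0$ for $i>s$ by Grothendieck vanishing.)

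To produce such an $\m$ I would localize and complete at a well-chosen point. Because $\Ass_R M$ is finite (Theorem \ref{finite associated prime for diff adm alg}), $\Supp_R M=\bigcup_{P\in\Ass_R M}\variety(P)$ is closed; choose a minimal prime $P$ of $\Supp_R M$ with $\dim R/P=s$ and any maximal ideal $\m\supseteq P$. Since $R$ is a finitely generated domain over $K$ it is equidimensional and catenary, so $\dim R_{\m}/PR_{\m}=s$; hence $\dim_{R_{\m}}\Supp_{R_{\m}}M_{\m}=s$ and $H^s_{\m}(M)=H^s_{\m R_{\m}}(M_{\m})$. Now $M_{\m}=\mathcal{T}'(R_{\m})$ for the Lyubeznik functor obtained by localizing the defining ideals, and on completing ($\widehat{R_{\m}}\cong F[[x_1,\dots,x_d]]$ by Cohen's theorem, with $F/K$ algebraic) we get $\widehat{M_{\m}}=\mathcal{T}''(\widehat{R_{\m}})$, again a Lyubeznik functor; flat base change gives $H^s_{\m R_{\m}}(M_{\m})\otimes_{R_{\m}}\widehat{R_{\m}}=H^s_{\widehat{\m}}(\widehat{M_{\m}})$, and $\dim\Supp_{\widehat{R_{\m}}}\widehat{M_{\m}}=s$ because $\dim\widehat{R_{\m}}/\q\widehat{R_{\m}}=\dim R_{\m}/\q R_{\m}$ for every prime $\q\subseteq\m$. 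By faithful flatness it now suffices to show $H^s_{\widehat{\m}}(\widehat{M_{\m}})\ne0$.

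Thus the whole matter reduces to: a Lyubeznik functor $N$ over $F[[x_1,\dots,x_d]]$ with $\dim\Supp N=s$ satisfies $H^s_{\widehat\m}(N)\ne0$, equivalently $\injdim=\dim\Supp$ holds over complete regular local rings containing a characteristic-zero field. I expect this to be the main obstacle. It is strictly stronger than the corresponding statement for general holonomic $\D$-modules: if $\q\subsetneq\widehat\m$ is a prime with $\dim\widehat{R_{\m}}/\q\le1$ then $E(\widehat{R_{\m}}/\q)$ is still a holonomic $\D(\widehat{R_{\m}},F)$-module but has injective dimension $0<\dim\Supp$, so the Lyubeznik-functor structure of $N$ genuinely has to be used. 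The natural line of attack is to transplant the mechanism of Theorem \ref{P is maixmal ideal main th} to $\widehat{R_{\m}}$: assuming $\injdim_{\widehat{R_{\m}}}N<s$, isolate --- inside a suitable local cohomology module of $N$ and after killing the torsion at the larger associated primes --- a copy of $E(\widehat{R_{\m}}/\q)^t$ with $\dim\widehat{R_{\m}}/\q=s$, and contradict Lemma \ref{Dorreh result regarding completion}, supplementing this (for small $s$) with holonomic/Matlis-duality arguments in the spirit of \cite{WZ-17} and \cite{SZ-19}. A route avoiding completion altogether would be to prove a ``going up'' property for the Bass numbers of $M$ --- if $\mu_i(P,M)>0$ and $Q$ covers $P$, then $\mu_{i+1}(Q,M)>0$ --- which, via Theorem \ref{P is maixmal ideal main th} applied to $R_Q$, again reduces to a local-cohomology non-vanishing, and then iterating along a saturated chain from $P$ up to a maximal ideal gives $c\ge s$.
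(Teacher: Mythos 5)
Your reduction strategy has a genuine gap at exactly the point you flag as ``the main obstacle,'' and the obstacle is not one that the tools in this paper (or the cited literature) can overcome. After localizing and completing, you need: for a Lyubeznik functor module $N$ over $F[[x_1,\dots,x_d]]$ with $\dim\Supp N=s$, one has $H^s_{\widehat\m}(N)\neq 0$, i.e.\ $\injdim=\dim\Supp$ over complete equicharacteristic-zero regular local rings. That statement is not established anywhere in this paper; Theorem~\ref{proof of bound on inj dim} only gives $\injdim\geq\dim\Supp-1$ for differentiably admissible algebras (which include $F[[x_1,\dots,x_d]]$), and the equality in the complete local case is precisely what remains open. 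More importantly, the mechanism of Theorem~\ref{P is maixmal ideal main th} cannot be ``transplanted to $\widehat{R_\m}$'' as you suggest: the contradiction there is obtained from the fact that a height-$(d-1)$ prime $P$ of an affine domain is contained in \emph{infinitely many} maximal ideals (via Noether normalization of the one-dimensional affine domain $R/P$), which forces $\bigoplus_{P\subseteq\m}E(R/\m)$ to have infinite length and contradicts holonomicity of $E(R/P)$. In a local ring every prime lies under a single maximal ideal, and indeed $E(F[[x_1,\dots,x_d]]/\mathfrak{q})$ with $\height\mathfrak{q}=d-1$ \emph{is} holonomic, so no contradiction is available. Your reduction to the complete local case destroys exactly the global feature of affine domains that makes the argument work. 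The same objection applies to your proposed ``going up for Bass numbers'': after passing to $R_Q$ it again becomes a local non-vanishing statement of the same unproved kind.

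The paper avoids this entirely by staying global and inducting on $s=\dim_R(\Supp_R\mathcal{T}(R))$: using Noether normalization it finds $y\in R$ with $K[y]\cap P_i=0$ for the primes $P_i$ of coheight $s$ in the support, so that $S^{-1}R$ with $S=K[y]\setminus\{0\}$ is again a regular affine domain (over $K(y)$) in which the support dimension has dropped to $s-1$. The induction hypothesis produces a maximal ideal $Q'$ of $S^{-1}R$ with $\mu_{s-1}(Q',S^{-1}\mathcal{T}(R))>0$; its contraction $Q$ to $R$ is a \emph{non-maximal} prime with $\mu_{s-1}(Q,\mathcal{T}(R))>0$, so Theorem~\ref{P is maixmal ideal main th} (applied to the original $R$) rules out $\injdim_R\mathcal{T}(R)=s-1$, and Lyubeznik's upper bound then forces equality with $s$. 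You should note that your opening observations (only $\geq$ needs proof; the top Bass number is carried at maximal ideals; $\mu_s(\m,M)=\mu_0(\m,H^s_\m(M))$) are all correct and consistent with the paper --- the failure is solely in the route chosen to produce the non-vanishing.
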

\begin{proof}
We use induction on $s=\dim_R(\Supp_R\mathcal{T}(R))$. Let $t=\dim R$. If $s=0$, there is nothing to prove. Assume that we have proved the result when $\dim_R(\Supp_R \mathcal{G}(R))=s-1$ for all regular affine domain over a field of characteristic $0$ and all Lyubeznik functor $\mathcal{G}$ on $\Mod(R)$. 

Let $s\geq1$. By \ref{finite associated prime for diff adm alg}, $\Ass_R\mathcal{T}(R)$ is finite. Let $P_1,\ldots,P_k$ be primes in $\Supp_R\mathcal{T}(R)$ such that $\dim(R/P_i)=s$ for all $i$. Since $R$ is a finitely generated $K$-algebra, by Noether normalization, there are algebraically independent variables $x_1\ldots,x_t$ such that $R$ is a finite $K[x_1,\ldots,x_t]$-module and a linear combination $y$ of $x_1,\ldots,x_t$ with $K[y]\cap P_i=0$ for all $i$. Then, $S^{-1}R$ where $S=K[y]\setminus \{0\}$ is a regular affine domain over $K(y)$ of dimension $t-1$. By \cite[Lemma 3.1]{Lyu-93}, there exists a Lyubeznik functor $\mathcal{F}$ on $\Mod (S^{-1}R)$ such that $\mathcal{F}(S^{-1}R)\cong S^{-1}\mathcal{T}(R)$. Also since $\dim_{S^{-1}R}(\Supp_{S^{-1}R}\left(S^{-1}\mathcal{T}(R)\right))=s-1$, by the induction hypothesis $\injdim_{S^{-1}R} \left(S^{-1}\mathcal{T}(R)\right)=s-1$. Let $Q'$ be a maximal ideal of $S^{-1}R$ such that $\mu_{s-1}(Q',S^{-1}\mathcal{T}(R))>0$. Let $Q$ be the contraction of $Q'$ in $R$. Then, $\mu_{s-1}(Q,\mathcal{T}(R))>0$ implies $\injdim_R \mathcal{T}(R)\geq s-1$. Again, since $\height Q\leq t-1$, we get $\injdim_R \mathcal{T}(R)= s$ by Theorem \ref{P is maixmal ideal main th}.
\end{proof}
\section{A lower bound on injective dimension}
In this section, we aim to prove  \hyperref[lower bound on injective dim]{Theorem C}. Before that we establish several preparatory results that are needed for our proofs. We extend some of the results of Zhang and Switala \cite{SZ-19} to the setting of differentiably admissible $K$-algebras and holonomic $\D$-modules mostly adapting their proof techniques. We begin with the following lemma.
\begin{lemma}\label{bounded inj dim}
     Let $R$ be a regular $K$-algebra where $K$ is a field of characteristic $0$. Let $\mathcal{T}$ be a Lyubeznik functor on $\Mod(R)$ and let $S$ be a multiplicative closed set of $R$. Then, $$\injdim_{S^{-1}R} S^{-1}\mathcal{T}(R)\leq \dim_{S^{-1}R} S^{-1}\mathcal{T}(R).$$
\end{lemma}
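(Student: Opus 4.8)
The plan is to reduce the statement to a localization at a single maximal ideal and then invoke Lyubeznik's bound over the completion. First I would observe that $S^{-1}R$ is again a regular $K$-algebra, and by \cite[Lemma 3.1]{Lyu-93} there is a Lyubeznik functor $\mathcal{F}$ on $\Mod(S^{-1}R)$ with $\mathcal{F}(S^{-1}R)\cong S^{-1}\mathcal{T}(R)$; so it suffices to bound $\injdim_{S^{-1}R}\mathcal{F}(S^{-1}R)$. Writing $A=S^{-1}R$ and $N=\mathcal{F}(A)$, injective dimension and support dimension can both be computed after localizing at maximal ideals of $A$: indeed $\injdim_A N=\sup_{\mathfrak n\in\maxsp(A)}\injdim_{A_{\mathfrak n}} N_{\mathfrak n}$ (Bass numbers are detected locally, and for a finite-dimensional support there is no issue passing to the sup), and likewise $\dim_A(\Supp_A N)=\sup_{\mathfrak n} \dim_{A_{\mathfrak n}}(\Supp_{A_{\mathfrak n}} N_{\mathfrak n})$. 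So I would fix a maximal ideal $\mathfrak n$ of $A$ and reduce to showing $\injdim_{A_{\mathfrak n}} N_{\mathfrak n}\le \dim_{A_{\mathfrak n}}(\Supp_{A_{\mathfrak n}} N_{\mathfrak n})$.

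Next, $N_{\mathfrak n}=\mathcal{F}(A)_{\mathfrak n}\cong \mathcal{G}(A_{\mathfrak n})$ for a Lyubeznik functor $\mathcal{G}$ on $\Mod(A_{\mathfrak n})$, again by the localization property of Lyubeznik functors (\cite[Lemma 3.1]{Lyu-93}). Now $A_{\mathfrak n}$ is a regular local ring containing a field of characteristic $0$. Passing to the completion $\widehat{A_{\mathfrak n}}$, which is faithfully flat, injective dimension does not drop (Bass numbers are preserved: $\mu_i(\mathfrak p A_{\mathfrak n}, N_{\mathfrak n})$ contributes to $\mu_i(\mathfrak q, \widehat{A_{\mathfrak n}}\otimes N_{\mathfrak n})$ for suitable $\mathfrak q$), and the support dimension is also preserved under completion at a maximal ideal. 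The functor $\mathcal{G}$ commutes with the flat base change to $\widehat{A_{\mathfrak n}}$, yielding a Lyubeznik functor over $\widehat{A_{\mathfrak n}}\cong F[[x_1,\dots,x_m]]$ (Cohen structure theorem). Finally, Lyubeznik's \cite[Theorem 3.4]{Lyu-93} applies over this complete regular local ring of equal characteristic $0$: for any Lyubeznik functor $\mathcal{T}'$, $\injdim \mathcal{T}'(\widehat{A_{\mathfrak n}}) \le \dim(\Supp \mathcal{T}'(\widehat{A_{\mathfrak n}}))$. Chaining the (in)equalities back up gives the claim.

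The main obstacle I anticipate is the bookkeeping around how injective dimension and support dimension interact with the two base changes (localization at $S$, then at $\mathfrak n$, then completion): one must check that taking the supremum of local injective dimensions over maximal ideals genuinely recovers $\injdim_{S^{-1}R} S^{-1}\mathcal{T}(R)$ even though $S^{-1}R$ need not be local, and that no Bass numbers at non-maximal primes are ``lost'' when passing to completions — this is exactly where the behaviour of $\mu_i(\mathfrak p, -)$ under flat local maps must be cited carefully (e.g. via the formula $\mu_i(\mathfrak p, M)=\dim_{k(\mathfrak p)}\Ext^i_{R_{\mathfrak p}}(k(\mathfrak p), M_{\mathfrak p})$ and the standard behaviour of $\Ext$ under faithfully flat extension with zero-dimensional closed fibre). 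The rest is a routine assembly of the localization property of Lyubeznik functors with Lyubeznik's finiteness theorem over complete regular local rings of characteristic $0$.
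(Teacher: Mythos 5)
Your proposal is correct, and its opening step — passing to a Lyubeznik functor $\mathcal{F}$ on $\Mod(S^{-1}R)$ via \cite[Lemma 3.1]{Lyu-93} and noting that $S^{-1}R$ is again a regular $K$-algebra — is exactly the paper's entire proof, which then just cites \cite[Theorem 3.4 (b)]{Lyu-93}, valid for any regular ring containing a field of characteristic $0$ (local or not). Your further reduction to localizations at maximal ideals and to the completion is therefore unnecessary: it essentially re-derives, with the attendant Bass-number bookkeeping you flag as the main obstacle, the reduction already built into Lyubeznik's proof of Theorem 3.4.
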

\begin{proof}
  By \cite[Lemma 3.1]{Lyu-93}, there is a Lyubeznik functor $\mathcal{F}$ on $\Mod (S^{-1}R)$ such that $\mathcal{F}(S^{-1}R)\cong S^{-1}\mathcal{T}(R)$. Note that $S^{-1}R$ is also a regular $K$-algebra. Therefore, the result follows from \cite[Theorem 3.4 (b)]{Lyu-93}. 
\end{proof}
The next lemma that we prove in the sequel is as follows.
\begin{lemma}\label{holonomic over localization}
    Let $R$ be a differentiably admissible $K$-algebra where $K$ is a field of characteristic $0$. Let $\mathcal{T}$ be a Lyubeznik functor on $\Mod(R)$. Let $S$ be a multiplicative closed set of $R$ with $\injdim_{S^{-1}R} S^{-1}\mathcal{T}(R)=t$. If $S^{-1}P\in \Spec (S^{-1}R)$ with $\mu_t(S^{-1}P,S^{-1}\mathcal{T}(R))>0$,  then $E_{S^{-1}R}(S^{-1}R/S^{-1}P)$ is holonomic as $\D(S^{-1}R,K)$-module.
    \end{lemma}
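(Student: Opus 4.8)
The plan is to transcribe the argument from the proof of Theorem~\ref{P is maixmal ideal main th} (equivalently Corollary~\ref{height id bigger than equal to d-1}), stopping at the point where holonomicity of the relevant injective hull is extracted. Write $A := S^{-1}R$, $\mathfrak{p} := S^{-1}P$ and $M := S^{-1}\mathcal{T}(R)$, so that $\injdim_A M = t$. By \cite[Lemma 3.1]{Lyu-93} there is a Lyubeznik functor $\mathcal{F}$ on $\Mod(A)$ with $\mathcal{F}(A)\cong M$; in particular $M$, and hence every $H^i_J(M)$, is a $\D(A,K)$-module. First I would record that $\mu_t(\mathfrak{p},M)=\mu_0(\mathfrak{p},H^t_{\mathfrak{p}}(M))$ by \cite[Lemma 1.4, Theorem 3.4 (b)]{Lyu-93}, so the hypothesis gives $\mathfrak{p}\in\Ass_A H^t_{\mathfrak{p}}(M)$. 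Since local cohomology commutes with localization, $H^t_{\mathfrak{p}}(M)=S^{-1}H^t_P(\mathcal{T}(R))$, and $\Ass_R H^t_P(\mathcal{T}(R))$ is finite by Theorem~\ref{finite associated prime for diff adm alg}; hence $\Ass_A H^t_{\mathfrak{p}}(M)$ is finite, and as $H^t_{\mathfrak{p}}(M)$ is $\mathfrak{p}$-torsion every associated prime contains $\mathfrak{p}$, so we may write $\Ass_A H^t_{\mathfrak{p}}(M)=\{\mathfrak{p},\mathfrak{q}_1,\ldots,\mathfrak{q}_s\}$ with $\mathfrak{p}\subsetneq\mathfrak{q}_i$ for each $i$.

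Next, consider the exact sequence
$$0\to \Gamma_{\mathfrak{q}_1\cdots\mathfrak{q}_s}(H^t_{\mathfrak{p}}(M))\to H^t_{\mathfrak{p}}(M)\to N\to 0.$$
As in the proof of Theorem~\ref{P is maixmal ideal main th}, $N=\mathcal{F}'(A)$ for some Lyubeznik functor $\mathcal{F}'$ on $\Mod(A)$, and $\Ass_A N=\Ass_A H^t_{\mathfrak{p}}(M)\setminus\variety(\mathfrak{q}_1\cdots\mathfrak{q}_s)=\{\mathfrak{p}\}$ (since $N$ is $(\mathfrak{q}_1\cdots\mathfrak{q}_s)$-torsion-free). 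For $f\in A\setminus\mathfrak{p}$, $\injdim_A M=t$ gives $H^{t+1}_{(\mathfrak{p},f)}(M)=0$, and the exact sequence $H^t_{(\mathfrak{p},f)}(M)\to H^t_{\mathfrak{p}}(M)\to H^t_{\mathfrak{p}}(M)_f\to H^{t+1}_{(\mathfrak{p},f)}(M)=0$ shows $H^t_{\mathfrak{p}}(M)\to H^t_{\mathfrak{p}}(M)_f$ is surjective; hence so is $N\to N_f$, which is also injective because $\Ass_A N=\{\mathfrak{p}\}$ and $f\notin\mathfrak{p}$. Thus $N=N_f$ for all $f\in A\setminus\mathfrak{p}$, whence $N=N_{\mathfrak{p}}$.

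Now $N_{\mathfrak{p}}=\mathcal{F}'(A)_{\mathfrak{p}}=\mathcal{F}''(A_{\mathfrak{p}})$ for a Lyubeznik functor $\mathcal{F}''$ on $\Mod(A_{\mathfrak{p}})$ (again by \cite[Lemma 3.1]{Lyu-93}), where $A_{\mathfrak{p}}=R_P$ is regular local containing $\mathbb{Q}$, with $\Ass_{A_{\mathfrak{p}}}N_{\mathfrak{p}}=\{\mathfrak{p}A_{\mathfrak{p}}\}$. By \cite[Theorem 3.4 (b)]{Lyu-93} this forces $N_{\mathfrak{p}}=E_{A_{\mathfrak{p}}}(A_{\mathfrak{p}}/\mathfrak{p}A_{\mathfrak{p}})^{a}$ for some finite $a\geq 1$ (with $a\geq 1$ since $\mathfrak{p}\in\Ass_A N$ forces $N\neq 0$). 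As $E_{A_{\mathfrak{p}}}(A_{\mathfrak{p}}/\mathfrak{p}A_{\mathfrak{p}})\cong E_A(A/\mathfrak{p})$, we get $N\cong E_A(A/\mathfrak{p})^{a}$. Since $N=\mathcal{F}'(A)$ is a Lyubeznik functor applied to $A=S^{-1}R$, it is a holonomic $\D(A,K)$-module, and a direct summand of a holonomic module is holonomic; therefore $E_{S^{-1}R}(S^{-1}R/S^{-1}P)=E_A(A/\mathfrak{p})$ is holonomic over $\D(S^{-1}R,K)$.

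The step that requires genuine care — and which I expect to be the main obstacle — is the claim that a Lyubeznik functor applied to $A=S^{-1}R$ produces a holonomic $\D(A,K)$-module: a localization $S^{-1}R$ of a differentiably admissible $K$-algebra need not itself be differentiably admissible (it may fail equi-codimensionality or the algebraic-residue-field condition), so the cited extensions of Lyubeznik's finiteness theory do not apply verbatim. I would resolve this by writing $N=S^{-1}\mathcal{G}(R)$ for a Lyubeznik functor $\mathcal{G}$ on $\Mod(R)$ (tracking the localized defining ideals through \cite[Lemma 3.1]{Lyu-93}), noting that $\mathcal{G}(R)$ is a holonomic $\D(R,K)$-module over the differentiably admissible algebra $R$, and then extending Proposition~\ref{localization preserves holonomicity} from localization at a maximal ideal to an arbitrary multiplicative set $S$ — i.e. that $S^{-1}$ of a holonomic $\D(R,K)$-module is a holonomic $\D(S^{-1}R,K)$-module. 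Everything else is a routine transcription of arguments already used earlier in the paper.
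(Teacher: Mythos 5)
Your proposal is correct and follows essentially the same route as the paper's own proof: the same Bass-number reduction $\mu_t(S^{-1}P,S^{-1}\mathcal{T}(R))=\mu_0(S^{-1}P,H^t_{S^{-1}P}(S^{-1}\mathcal{T}(R)))$, the same finiteness of associated primes obtained by localizing, the same torsion-quotient exact sequence and $N=N_f$ argument, and the same identification $N\cong E_{S^{-1}R}(S^{-1}R/S^{-1}P)^a$ via \cite[Theorem 3.4(b)]{Lyu-93}. The one step you flag as delicate---that $N$ is holonomic over $\D(S^{-1}R,K)$ even though $S^{-1}R$ need not be differentiably admissible---is also left implicit in the paper, which simply asserts the final implication, so your sketched remedy (write $N=S^{-1}\mathcal{G}(R)$ with $\mathcal{G}(R)$ holonomic over $\D(R,K)$ and extend Proposition \ref{localization preserves holonomicity} to arbitrary multiplicative sets) is, if anything, more explicit than the published argument.
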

    \begin{proof}
Set $M=\mathcal{T}(R)$. Note  that $(H^t_{S^{-1}P}(S^{-1}M))_{S^{-1}P}=H^t_{PR_P}(M_P)=(H^t_P(M))_P$.  Lemma \ref{bounded inj dim} implies that  $(H^t_{S^{-1}P}(S^{-1}M))_{S^{-1}P}$ is injective as $R_P$-module. Therefore, $\mu_t(S^{-1}P,S^{-1}M)=\mu_0(S^{-1}P,H^t_{S^{-1}P}(S^{-1}M))$, see \cite[Lemma 1.4]{Lyu-93}. Since, $H^t_P(M)$ is holonomic $\D(R,K)$-module, it has finitely  many associated primes. Therefore, $H^t_{S^{-1}P}(S^{-1}M)=S^{-1}(H^t_P(M))$ has finitely many associated primes. Let $$\Ass_{S^{-1}R}H^t_{S^{-1}P}(S^{-1}M)=\{S^{-1}P,Q_1,\ldots,Q_m\}$$
Consider the exact sequence
 $$0\rightarrow \Gamma_{Q_1\cdots Q_m}(H^t_{S^{-1}P}(S^{-1}M))\rightarrow H^t_{S^{-1}P}(S^{-1}M)\rightarrow N\rightarrow 0.$$
 Note that $N=\mathcal{F}(S^{-1}R)$ for some Lyubeznik functor $\mathcal{F}$ and $\Ass N=\{S^{-1}P\}$. Let $f\in S^{-1}R\setminus S^{-1}P$. Since $\injdim S^{-1}M=t$, we have the following exact sequence 
 $$H^t_{(S^{-1}P,f)}(S^{-1}M)\rightarrow H^t_{S^{-1}P}(S^{-1}M)\rightarrow H^t_{S^{-1}P}(S^{-1}M)_f\rightarrow H^{t+1}_{(P,f)}(S^{-1}M)=0.$$ This implies that the natural map $  H^t_{S^{-1}P}(S^{-1}M)\rightarrow H^t_{S^{-1}P}(S^{-1}M)_f$ is surjective and hence the natural map from $N\rightarrow N_f$ is surjective. Moreover, the fact that $\Ass N=\{S^{-1}P\}$ and $f\in S^{-1}R\setminus S^{-1}P$ implies the map $N\rightarrow N_f$ is injective as well. Thus, $N=N_f$ for all $f\in S^{-1}R\setminus S^{-1}P$. Therefore, $N=N_{S^{-1}P}=\mathcal{F}_{R_P}(R_P)=E_{R_P}(R_P/PR_P)^t=E_{S^{-1}R}(S^{-1}R/S^{-1}P)^t$ for some finite $t>0$, see \cite[Theorem 3.4 (b)]{Lyu-93}. This implies that $E_{S^{-1}R}(S^{-1}R/S^{-1}P)$ is holonomic as $\D(S^{-1}R,K)$-module.
    \end{proof}
     The next proposition, proved in \cite[Proposition 5.4(b)]{SZ-19} for when $R$ is a power series ring over a field $K$ of characteristic $0$, also holds in the following setup, with the proof being identical in every detail.
\begin{proposition}\label{primes contained in finite no of ideals}
    Let $(R,\m)$ be a differentiably admissible $K$-algebra where $K$ is a field of characteristic $0$. Let $\mathcal{T}$ be a Lyubeznik functor on $\Mod(R)$. Let $S$ be a multiplicative closed set of $R$ with $\injdim_{S^{-1}R} S^{-1}\mathcal{T}(R)=t$. If $S^{-1}P\in \Spec (S^{-1}R)$ with $\mu_t(S^{-1}P,S^{-1}\mathcal{T}(R))>0$,  then $S^{-1}P$ is contained in finitely many distinct prime ideals of $S^{-1}R$.
 \end{proposition}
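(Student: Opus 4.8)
The plan is to extract the finiteness from the holonomicity supplied by Lemma \ref{holonomic over localization}, by pinning down exactly the support of a suitable cokernel. Write $T=S^{-1}R$, $\mathfrak p=S^{-1}P$, $M=\mathcal T(R)$, and $c=\height_T\mathfrak p$. By Lemma \ref{holonomic over localization} the module $E:=E_{T}(T/\mathfrak p)$ is holonomic over $\D(T,K)$. Identify $E=E_{T_{\mathfrak p}}(\kappa(\mathfrak p))=H^{c}_{\mathfrak p T_{\mathfrak p}}(T_{\mathfrak p})$ with the localization at $\mathfrak p$ of the $\D(T,K)$-module $H^{c}_{\mathfrak p}(T)$, let $\iota\colon H^{c}_{\mathfrak p}(T)\to E$ be the resulting $\D(T,K)$-linear map, and set $C:=\operatorname{coker}\iota$; as a quotient of the holonomic module $E$, the module $C$ is again holonomic over $\D(T,K)$.

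The crucial point is that $\Supp_{T}C=V(\mathfrak p)\setminus\{\mathfrak p\}$. Localizing $\iota$ at a prime $\mathfrak q\in V(\mathfrak p)$ gives the natural map $H^{c}_{\mathfrak p T_{\mathfrak q}}(T_{\mathfrak q})\to E_{T_{\mathfrak q}}(T_{\mathfrak q}/\mathfrak p T_{\mathfrak q})$. For $\mathfrak q=\mathfrak p$ the ideal $\mathfrak p T_{\mathfrak p}$ is maximal in the regular local ring $T_{\mathfrak p}$, both sides coincide with $H^{\dim T_{\mathfrak p}}_{\mathfrak p T_{\mathfrak p}}(T_{\mathfrak p})$ and the map is an isomorphism, so $C_{\mathfrak p}=0$; for $\mathfrak q\supsetneq\mathfrak p$ the ideal $\mathfrak p T_{\mathfrak q}$ is not maximal and the map $H^{c}_{\mathfrak p T_{\mathfrak q}}(T_{\mathfrak q})\to E_{T_{\mathfrak q}}(T_{\mathfrak q}/\mathfrak p T_{\mathfrak q})$ is then not surjective, so $C_{\mathfrak q}\neq 0$. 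Since $C$ is a quotient of $E$ and $\Supp_{T}E=V(\mathfrak p)$, this gives $\Supp_{T}C=V(\mathfrak p)\setminus\{\mathfrak p\}$ exactly.

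To finish, a holonomic $\D(T,K)$-module has only finitely many associated primes — this is part of the standard finiteness package for holonomic $\D$-modules, underlying Theorem \ref{finite associated prime for diff adm alg} — so $\Supp_{T}C=\bigcup_{i=1}^{r}V(\mathfrak q_i)$ for finitely many primes $\mathfrak q_1,\dots,\mathfrak q_r$, each strictly containing $\mathfrak p$ because $\mathfrak p\notin\Supp_{T}C$. Every prime of $T$ that is minimal over $\mathfrak p$ lies in $\Supp_{T}C$, hence contains and therefore equals one of the $\mathfrak q_i$; so $T/\mathfrak p$ has only finitely many height-one primes, whence $\dim(T/\mathfrak p)\le 1$, every $\mathfrak q_i$ is maximal, and $V(\mathfrak p)=\{\mathfrak p,\mathfrak q_1,\dots,\mathfrak q_r\}$ is finite — that is, $\mathfrak p$ lies in only finitely many primes of $T$. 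The step I expect to require the most care is the non-surjectivity assertion inside the support computation: one must verify that $H^{c}_{\mathfrak p T_{\mathfrak q}}(T_{\mathfrak q})\to E_{T_{\mathfrak q}}(T_{\mathfrak q}/\mathfrak p T_{\mathfrak q})$ is not onto when $\mathfrak q\supsetneq\mathfrak p$ (for instance because the target is divisible by the elements of $\mathfrak p T_{\mathfrak q}$ whereas the source is not, or via the exact sequence relating these two modules to the injective hulls $E_{T_{\mathfrak q}}(T_{\mathfrak q}/\mathfrak q')$ with $\mathfrak q'\supsetneq\mathfrak p$), and one should keep track of the fact that the holonomicity statements are being applied to $\D(S^{-1}R,K)$-modules even though $S^{-1}R$ itself need not be differentiably admissible. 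This is precisely the line of argument of \cite[Proposition 5.4(b)]{SZ-19}.
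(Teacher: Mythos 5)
Your argument is correct and is essentially the paper's own proof: the paper likewise extracts holonomicity of $E_{S^{-1}R}(S^{-1}R/S^{-1}P)$ from Lemma \ref{holonomic over localization} and then simply defers to the argument of \cite[Proposition 5.4(b)]{SZ-19}, which is exactly the cokernel-of-$H^{c}_{\mathfrak p}(T)\to E$ computation you carry out. The one step you flag does close as you suggest: via $\Gamma_{\mathfrak p}$ of the residue complex of $T_{\mathfrak q'}$ the cokernel at a prime $\mathfrak q'$ immediately above $\mathfrak p$ is the image of $E(T/\mathfrak p)\to E(T/\mathfrak q')$, which is nonzero because $H^{c+1}_{\mathfrak p}(T_{\mathfrak q'})=0$ by Hartshorne--Lichtenbaum (and note that for the finiteness conclusion it suffices to know $\Supp C$ contains all primes minimal \emph{among those strictly containing} $\mathfrak p$, which is what your "minimal over $\mathfrak p$" should read).
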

 \begin{proof}
     By Lemma \ref{holonomic over localization},  $E_{S^{-1}R}(S^{-1}R/S^{-1}P)$ is holonomic as $\D(S^{-1}R,K)$-module and hence it has finite length as $\D(S^{-1}R,K)$-module. The remainder of the proof follows verbatim from \cite[Proposition 5.4(b)]{SZ-19}, and therefore we omit the details.
 \end{proof}
 The following proposition is pivotal for establishing the main result of this section 5.
 \begin{proposition}\label{Prop about inj dim of localization}
     Let $(R,\m)$ be a differentiably admissible $K$-algebra where $K$ is a field of characteristic $0$. Let $\mathcal{T}$ be a Lyubeznik functor on $\Mod(R)$. Let $Q$ be a prime ideal of $R$ belonging to $\Supp_R\mathcal{T}(R)$, and let $f\in QR_Q$ be an element that does not belong to any minimal prime of $\mathcal{T}(R)_Q$. Then, $$\injdim_{(R_Q)_f}\mathcal({T}(R)_{Q})_f=\dim\Supp_{(R_Q)_f}\mathcal({T}(R)_{Q})_f.$$
 \end{proposition}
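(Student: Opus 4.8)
\emph{Proposal.}
The plan is to prove the two inequalities separately: the bound $\injdim\le\dim\Supp$ is immediate, and the reverse is obtained by transplanting the argument of Theorem \ref{P is maixmal ideal main th} and Corollary \ref{proof of the corollary} to the ring $A:=(R_Q)_f$. First set $M:=\mathcal T(R)$ and let $S\subseteq R$ be the multiplicative set generated by $R\setminus Q$ together with (a representative, which after clearing denominators may be taken to lie in $Q\subseteq\mathfrak m$, of) $f$, so that $S^{-1}R=A$ and $S^{-1}M=(\mathcal T(R)_Q)_f=:N$. By \cite[Lemma 3.1]{Lyu-93} one has $N=\mathcal G(A)$ for some Lyubeznik functor $\mathcal G$ on $\Mod(A)$, $A$ is again a regular $K$-algebra, and $N\neq 0$ because $f$ avoids every minimal prime of $M_Q$. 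Then Lemma \ref{bounded inj dim} gives $\injdim_AN\le\dim\Supp_AN=:s$, so the whole content is the reverse inequality. Two elementary structural facts about $A$ will be used throughout: (a) \emph{$A$ is equi-codimensional of dimension $\dim R_Q-1$} --- a prime of $R_Q$ disjoint from $\{f^n\}$ is a closed point of $D(f)\subseteq\Spec R_Q$ exactly when every strictly larger prime absorbs $f$, and since $R_Q/\mathfrak P$ is a regular, hence factorial, local ring whose height-one primes intersect in $0$, such a $\mathfrak P$ must have coheight exactly $1$; (b) consequently $\dim\Supp_AN=\dim\Supp_{R_Q}M_Q-1$, and the coheight-$s$ primes of $A$ lying in $\Supp_AN$ are precisely the $S^{-1}\mathfrak r$ with $\mathfrak r$ a minimal prime of $\Supp_{R_Q}M_Q$ of coheight $s+1$.

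The key step is a ``$\mathfrak q$ is maximal'' statement on $A$, playing the role of Theorem \ref{P is maixmal ideal main th}: if $c=\injdim_AN$ and $\mu_c(\mathfrak q,N)>0$ for a prime $\mathfrak q$ of $A$, then $\mathfrak q$ is a maximal ideal of $A$. Indeed, Proposition \ref{primes contained in finite no of ideals} forces $\mathfrak q$ to be contained in only finitely many primes of $A$, and a Noetherian ring with finitely many primes has dimension $\le 1$, so $\dim A/\mathfrak q\le 1$. Suppose $\dim A/\mathfrak q=1$ and write $\mathfrak q=S^{-1}P$; by (a) the ring $A$ is equi-codimensional, so $\height_A\mathfrak q=\dim A-1=\dim R_Q-2$, whence $\height(Q/P)=2$ and $A/\mathfrak q=\big((R/P)_{Q/P}\big)_{\bar f}$ with $(R/P)_{Q/P}$ a two-dimensional Noetherian local domain and $\bar f$ a nonzero element of its maximal ideal. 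Inverting such an element yields a ring with infinitely many maximal ideals (the infinitely many height-one primes avoiding $\bar f$ become closed points). On the other hand Lemma \ref{holonomic over localization} makes $E_A(A/\mathfrak q)$ a holonomic, hence finite-length, $\D(A,K)$-module; feeding $\Gamma_{\mathfrak q}(-)$ into the Cousin (minimal injective) resolution of the regular ring $A$ and using Hartshorne--Lichtenbaum vanishing (for each maximal $\mathfrak M\supseteq\mathfrak q$ one has $\dim\widehat{A_{\mathfrak M}}/\mathfrak q\widehat{A_{\mathfrak M}}=1>0$) produces a short exact sequence of $\D(A,K)$-modules
\[
0\longrightarrow H^{\dim A-1}_{\mathfrak q}(A)\longrightarrow E_A(A/\mathfrak q)\longrightarrow\bigoplus_{\mathfrak q\subsetneq\mathfrak M}E_A(A/\mathfrak M)\longrightarrow 0 ,
\]
so the direct sum has finite length and only finitely many maximal ideals contain $\mathfrak q$ --- contradiction. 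Hence $\dim A/\mathfrak q=0$. (The same argument applies to any localization $S'^{-1}R$ with $S'\cap\mathfrak m\neq\varnothing$ that is still equi-codimensional, which is what the induction below needs.)

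With this in hand I would finish by induction on $s=\dim\Supp_AN$, mirroring the proof of Corollary \ref{proof of the corollary}. If $s=0$, then $\injdim_AN\le 0$ and $N\neq 0$, so $\injdim_AN=0=s$. If $s\ge 1$, let $\mathfrak P_1,\dots,\mathfrak P_k$ be the (finitely many, by Theorem \ref{finite associated prime for diff adm alg}) minimal primes of $\Supp_AN$ with $\dim A/\mathfrak P_j=s$; by (b) each arises from a coheight-$(s+1)\ge 2$ prime of $R_Q$, so the associated residue domains have positive transcendence degree over $K$, and as in Corollary \ref{proof of the corollary} one can pick a parameter $w$ (pulled back from $R_Q$) transcendental over $K$ and remaining so modulo each $\mathfrak P_j$. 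Localizing at $K[w]\setminus\{0\}$ gives a further localization $A'$ of $A$, a regular $K(w)$-algebra, in which every $\mathfrak P_j$ survives and $\dim\Supp_{A'}N'=s-1$ with $N'=A'\otimes_AN$ a Lyubeznik functor applied to $A'$; the induction hypothesis gives $\injdim_{A'}N'=s-1$, hence $\injdim_AN\ge s-1$, while $\injdim_AN\le s$ by Lemma \ref{bounded inj dim}. If $\injdim_AN$ were $s-1$, pick a prime of $A'$ with positive $(s-1)$-st Bass number; its contraction $\mathfrak Q$ to $A$ also has $\mu_{s-1}(\mathfrak Q,N)>0$, and $\height_A\mathfrak Q=\height_{A'}\mathfrak Q A'\le\dim A'=\dim A-1<\dim A$, so by the equi-codimensionality (a), $\mathfrak Q$ is not maximal --- contradicting the ``$\mathfrak q$ is maximal'' statement. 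Therefore $\injdim_AN=s$.

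The main obstacle is the ``$\mathfrak q$ is maximal'' step: everything hinges on upgrading ``$\mathfrak q$ lies in finitely many primes'' to ``$\mathfrak q$ is maximal,'' and this genuinely needs both the finite-length property of $E_A(A/\mathfrak q)$ coming from holonomicity (Lemma \ref{holonomic over localization}) and the fact that inverting $f$ on a local ring creates infinitely many maximal ideals beneath any coheight-one prime --- together with the equi-codimensionality (a), which is exactly what lets Theorem \ref{P is maixmal ideal main th}'s strategy transplant to $(R_Q)_f$. A secondary technical point is that the dimension-reducing localization in the induction is the analogue of the Noether-normalization step of Corollary \ref{proof of the corollary}, but must be run inside a localization of a possibly complete local ring rather than inside an affine domain, so one has to verify directly that localizing at $K[w]\setminus\{0\}$ does lower $\dim\Supp$ by exactly one while preserving the top-dimensional components --- which uses the description in (b) of those components and the availability of transcendental parameters in the relevant residue domains. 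Everything else (the Cousin complex, Hartshorne--Lichtenbaum vanishing, and invariance of injective dimension under localization) is standard.
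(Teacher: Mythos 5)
The paper's own proof of this proposition is a deferral: it asserts that once Lemma \ref{holonomic over localization} and Proposition \ref{primes contained in finite no of ideals} are available, the argument of Switala--Zhang \cite[Proposition 6.1]{SZ-19} goes through verbatim. Your first half is sound and consistent with that: the observations that $(R_Q)_f$ is equi-codimensional of dimension $\dim R_Q-1$, that every non-maximal prime of $(R_Q)_f$ lies under infinitely many maximal ideals (infinitely many height-one primes of the local domain $R_Q/\mathfrak P$ avoid $f$), and that Proposition \ref{primes contained in finite no of ideals} therefore forces any prime carrying the top Bass number to be maximal --- this is exactly the right use of the two ingredients the paper names. (Two small blemishes there: $R_Q/\mathfrak P$ is not ``regular, hence factorial'' --- a quotient of a regular local ring by a prime need not be regular --- though the conclusion survives via the count of height-one primes; and the Cousin/Hartshorne--Lichtenbaum detour is redundant, since ``finitely many primes contain $\mathfrak q$'' already contradicts ``infinitely many maximal ideals contain $\mathfrak q$'' directly.)

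The genuine gap is in your inductive step. You propose to lower $\dim\Supp$ by one by localizing at $K[w]\setminus\{0\}$ for a parameter $w$ transcendental modulo each top-dimensional component, ``as in Corollary \ref{proof of the corollary}.'' That step works for affine domains only because every maximal ideal has residue field algebraic over $K$, so every closed point automatically meets $K[w]\setminus\{0\}$ and dies in the localization. For a differentiably admissible algebra such as $R=K[[x,y,z]]$ this fails: with $Q=\mathfrak m$ and $f=x$, the maximal ideals of $A=(R_Q)_f$ are the height-two primes avoiding $x$, e.g.\ $(z,\,y-x)$, whose residue field is $K((x))$; the image of a transcendental $w$ in such a residue field is generally transcendental over $K$, so the maximal ideal survives the localization and $\dim\Supp$ does \emph{not} drop. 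Indeed one checks (already in $K[[x,y]]_x$) that no $w$ transcendental over $K$ can be algebraic modulo \emph{every} maximal ideal of $A$, so no choice of $w$ repairs this. Moreover, even formally, $A'=\left(K[w]\setminus\{0\}\right)^{-1}A$ is not of the form $(R'_{Q'})_{f'}$ for a differentiably admissible $R'$, so the induction hypothesis as you state it does not apply to it. This is precisely the obstruction that makes the $(R_Q)_f$ case genuinely harder than the affine case, and it is why the paper does not reuse the Noether-normalization induction of Corollary \ref{proof of the corollary} here but instead imports the different descent mechanism of \cite[Proposition 6.1]{SZ-19}. As written, your argument establishes that the top Bass number is concentrated at maximal ideals of $(R_Q)_f$, but not the equality $\injdim=\dim\Supp$.
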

\begin{proof}
    Combining Lemma \ref{holonomic over localization} with Proposition \ref{primes contained in finite no of ideals}, one finds that a careful reading of the proof of \cite[Proposition 6.1]{SZ-19} shows that exactly the same argument applies to our theorem.
\end{proof}
Now we prove \hyperref[lower bound on injective dim]{Theorem C}. We restate the theorem again for the convenience of the reader.
\begin{theorem}\label{proof of bound on inj dim}
     Let $R$ be a differentiably admissible $K$-algebra where $K$ is a field of characteristic $0$. Let $\mathcal{T}$ be a Lyubeznik functor on $\Mod(R)$. Then $$\injdim_R\mathcal{T}(R)\geq \dim\Supp_R \mathcal{T}(R)-1.$$
\end{theorem}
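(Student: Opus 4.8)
The plan is to reduce the inequality to the local case and then extract it from Proposition \ref{Prop about inj dim of localization} after inverting a single element, chosen so that the dimension of the support drops by exactly one. Write $M=\mathcal T(R)$ and $d=\dim_R(\Supp_R M)$; if $d\le 1$ there is nothing to prove since $\injdim_R M\ge 0\ge d-1$ for $M\neq 0$, so assume $d\ge 2$. For the reduction, pick a chain of primes of length $d$ inside $\Supp_R M=\variety(\ann_R M)$ and let $\m$ be a maximal ideal containing its top element. Then $R_\m$ is again a differentiably admissible $K$-algebra, $M_\m\cong\mathcal F(R_\m)$ for some Lyubeznik functor $\mathcal F$ on $\Mod(R_\m)$ by \cite[Lemma 3.1]{Lyu-93}, one has $\dim_{R_\m}(\Supp_{R_\m}M_\m)=d$ (a chain in $\Supp_{R_\m}M_\m$ lifts to one in $\Supp_R M$, and the chosen chain survives), and $\injdim_R M\ge\injdim_{R_\m}M_\m$ because Bass numbers are computed locally. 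Hence it suffices to prove $\injdim_R M\ge d-1$ when $(R,\m)$ is itself local, which I assume henceforth.

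Since $R$ is regular local, it is catenary and equidimensional, so I would fix a minimal prime $\p_0$ of $\Supp_R M$ with $\dim R/\p_0=d$ together with a saturated chain $\p_0\subsetneq\p_1\subsetneq\cdots\subsetneq\p_d=\m$. None of the finitely many primes consisting of $\p_{d-1}$ and the minimal primes of $M$ contains $\m$ (this is where $d\ge 2$ is used, so that no minimal prime of $\Supp_R M$ equals $\m$), so by prime avoidance there is $f\in\m$ lying outside all of them. Inverting $f$ deletes $\m$, and more generally every prime containing $f$; hence a chain in $\Supp_{R_f}M_f$ of length $d$ would have to terminate at a maximal ideal, i.e. at $\m$, which is impossible, so $\dim_{R_f}(\Supp_{R_f}M_f)\le d-1$. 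On the other hand $f\notin\p_{d-1}$ forces $f\notin\p_i$ for all $i\le d-1$, and each $\p_i$ lies in $\Supp_R M$, so the chain $\p_0\subsetneq\cdots\subsetneq\p_{d-1}$ persists in $\Supp_{R_f}M_f$ and $\dim_{R_f}(\Supp_{R_f}M_f)=d-1$. Applying Proposition \ref{Prop about inj dim of localization} with $Q=\m$ and this $f$ — noting $R_\m=R$, $M_\m=M$, so $(R_Q)_f=R_f$ and $(M_Q)_f=M_f$ — yields $\injdim_{R_f}M_f=\dim_{R_f}(\Supp_{R_f}M_f)=d-1$.

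It then remains to descend this to $M$ via two elementary inequalities. Since $M_f$ is already $f$-local, localizing any $R$-injective resolution of $M_f$ at $f$ produces an $R_f$-injective resolution of the same length, so $\injdim_{R_f}M_f\le\injdim_R M_f$; and since $\Ext^i_R(R/P,M_f)\cong\Ext^i_R(R/P,M)_f$ for every prime $P$ (localization commutes with $\Ext$ against finitely generated modules over the Noetherian ring $R$), vanishing of all Bass numbers of $M$ in degrees exceeding $\injdim_R M$ forces the same for $M_f$, so $\injdim_R M_f\le\injdim_R M$. Combining, $d-1=\injdim_{R_f}M_f\le\injdim_R M$, which is the assertion.

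The genuine input is Proposition \ref{Prop about inj dim of localization} (the differentiably admissible analogue of \cite[Proposition 6.1]{SZ-19}), which already absorbs the hard part; the only real choice in the remaining argument is the element $f$, and the point is that one can simultaneously make the support dimension drop by exactly one and retain a full-length chain below the deleted locus — this is precisely where the catenary and equidimensional structure of a regular local ring enters. I would expect no serious obstacle beyond this bookkeeping together with the routine verification of the two localization inequalities for injective dimension.
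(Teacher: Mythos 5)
Your proof is correct and takes essentially the same route as the paper: reduce to the local case by localizing at a maximal ideal realizing $\dim_R(\Supp_R\mathcal{T}(R))$, choose $f\in\m$ by prime avoidance outside the minimal primes of $\mathcal{T}(R)$ (you additionally avoid $\p_{d-1}$, which makes the drop of the support dimension by exactly one explicit rather than asserted), apply Proposition \ref{Prop about inj dim of localization} with $Q=\m$, and finish with $\injdim_{R_f}\mathcal{T}(R)_f\leq\injdim_R\mathcal{T}(R)$. The only blemish is the parenthetical identification $\Supp_R M=\variety(\ann_R M)$, which can fail for modules that are not finitely generated, but your argument never actually uses it --- only that $\Supp_R M$ is closed under specialization.
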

\begin{proof}
    We may assume that $ \dim_R(\Supp_R \mathcal{T}(R))\geq 2$, since otherwise we have nothing to prove.  First, we assume that $(R,\m)$ is local. By \ref{finite associated prime for diff adm alg}, $\Ass_R\mathcal{T}(R)$ is finite. By prime avoidance, we can choose $f\in \m$ that does not belong to any minimal prime of $\mathcal{T}(R)$. Therefore, $\dim_{R_f}(\Supp_{R_f}\mathcal{T}(R)_f)=\dim\Supp_{R}\mathcal{T}(R)-1$. By Proposition \ref{Prop about inj dim of localization} (applied to $Q=\m)$, we have $\dim_{R_f}(\Supp_{R_f}\mathcal{T}(R)_f)=\injdim_{R_f}\mathcal{T}(R)_f$. Since $\injdim_R\mathcal{T}(R)\geq\injdim_{R_f}\mathcal{T}(R)_f$, the result follows.
    
    If $R$ is not local, we choose a maximal ideal $\n$ of  $R$ such that  $\dim_{R_\n}(\Supp_{R_\n}\mathcal{T}(R)_\n)=\dim_R(\Supp_{R}\mathcal{T}(R))$. Note that $R_\n$ is differentiably admissible $K$-algebra and by \cite[Lemma 3.1]{Lyu-93}, $\mathcal{T}(R)_\n=\mathcal{F}(R_\n)$ for some Lyubeznik functor $\mathcal{F}$   on $\Mod(R_\n)$. Therefore from the local case,
    $$\dim_R(\Supp_{R}\mathcal{T}(R))-1=\dim_{R_\n}(\Supp_{R_\n}\mathcal{T}(R)_\n)-1\leq \injdim_{R_\n}\mathcal{T}(R)_\n\leq \injdim_{R}\mathcal{T}(R).$$
\end{proof}

\section{Finiteness of associated primes}
In this section, we prove \hyperref[finiteness of associated primes]{Theorem D}. First, we recall a theorem \cite[Theorem 23.2]{Mat-86} from Matsumura's book. It should be noted that there is a typographical error in the book regarding this theorem. The correct statement is  as follows.
\begin{theorem}\label{Matsumura theorem for associated primes}
    If $\phi: A\rightarrow B$ is a ring homomorphism and $B$ is flat over $A$, then for any $A$-module $M$,
    \begin{enumerate}[\rm (1)]
    \item $\Ass_A\left(B/PB\right)={Q}$ if $Q\in \Spec A$.
        \item $Ass_B\left(M\otimes_AB\right)=\bigcup_{P\in \Ass_AM} \Ass_B\left(B/PB\right).$
    \end{enumerate}
    
\end{theorem}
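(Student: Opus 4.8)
The plan is to reduce both assertions to the local description of associated primes via the residue field, and then to use flatness of $\phi$ at two points: that flatness preserves injectivity of multiplication maps, and that $\Hom$ out of a finitely presented module commutes with flat base change. Throughout I assume $A$ (hence $B$) Noetherian, which is the setting of the application, so that each $A/P$ is finitely presented and associated primes are detected by torsion. Part (1) is then quick: reading the statement for a single prime $Q\in\Spec A$, the module $B/QB=(A/Q)\otimes_A B$ is a nonzero flat module over the domain $A/Q$, and a flat module over a domain is torsion-free (multiplication by a nonzero element of $A/Q$ remains injective after tensoring with the flat module $B$). A nonzero torsion-free module over a domain has only the zero ideal as an associated prime, so $\Ass_{A/Q}(B/QB)=\{(0)\}$, which upon contraction to $A$ gives $\Ass_A(B/QB)=\{Q\}$.

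For Part (2) I would establish the sharper statement that, writing $P=Q\cap A$ for $Q\in\Spec B$, one has $Q\in\Ass_B(M\otimes_A B)$ if and only if $P\in\Ass_A M$ and $Q\in\Ass_B(B/PB)$; summing over $P\in\Ass_A M$ then yields the displayed equality. The inclusion $\supseteq$ is the easy half: if $P\in\Ass_A M$ then $A/P\hookrightarrow M$, and tensoring with the flat module $B$ gives $B/PB\hookrightarrow M\otimes_A B$, so every associated prime of $B/PB$ is an associated prime of $M\otimes_A B$.

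For the reverse inclusion, fix $Q\in\Ass_B(M\otimes_A B)$ and localize at $Q$; since $B_Q$ is flat over $A_P$ and $(M\otimes_A B)_Q\cong M_P\otimes_{A_P}B_Q$, I first show $P\in\Ass_A M$. If not, then $PA_P$ contains an element $a$ that is a nonzerodivisor on $M_P$; flatness of $B_Q$ over $A_P$ keeps multiplication by $a$ injective on $M_P\otimes_{A_P}B_Q$, and since $a\in P\subseteq Q$ this contradicts $QB_Q\in\Ass_{B_Q}(M_P\otimes_{A_P}B_Q)$. Once $P\in\Ass_A M$, any $\xi$ with $Q\xi=0$ is in particular annihilated by $P$, hence lies in $(0:_{M_P\otimes_{A_P}B_Q}P)=\Hom_{A_P}(\kappa(P),M_P\otimes_{A_P}B_Q)$; flat base change for $\Hom$ (with $\kappa(P)=A_P/PA_P$ finitely presented) identifies this with $\Hom_{A_P}(\kappa(P),M_P)\otimes_{A_P}B_Q$, which is a direct sum of copies of $B_Q/PB_Q$ indexed by a $\kappa(P)$-basis of the nonzero space $\Hom_{A_P}(\kappa(P),M_P)$. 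Detecting the $Q$-torsion inside this direct sum then forces $\Hom_{B_Q}(\kappa(Q),B_Q/PB_Q)\neq 0$, i.e. $Q\in\Ass_B(B/PB)$, completing the characterization.

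The step I expect to be the main obstacle is precisely this reverse inclusion, and within it the passage from ``$\xi$ is $Q$-torsion in $M_P\otimes_{A_P}B_Q$'' to ``$Q$ is associated to $B/PB$'': it is exactly here that the flat base change isomorphism for $\Hom(\kappa(P),-)$ is invoked, and where Noetherianity of $A$ (finite presentation of $\kappa(P)$) is essential. Everything else is formal bookkeeping with localizations together with the standard fact that associated primes are detected by nonzero maps out of the residue fields.
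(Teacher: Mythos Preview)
The paper does not supply its own proof of this statement: it is merely \emph{recalling} Matsumura's result \cite[Theorem~23.2]{Mat-86} as a black box to be used in the proof of Theorem~\ref{proof of finite ass prime for diff ad alg}. So there is nothing to compare against.

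Your argument is correct and is essentially the standard textbook proof (the one in Matsumura's book itself): torsion-freeness of a flat module over a domain handles part~(1), and for part~(2) the inclusion $\supseteq$ comes from tensoring the embedding $A/P\hookrightarrow M$ with the flat module $B$, while the harder inclusion $\subseteq$ is obtained by localizing at $Q$, first showing $P=Q\cap A\in\Ass_A M$ via a nonzerodivisor argument, and then using the flat base-change isomorphism $\Hom_{A_P}(\kappa(P),M_P)\otimes_{A_P}B_Q\cong\Hom_{A_P}(\kappa(P),M_P\otimes_{A_P}B_Q)$ to reduce to $B_Q/PB_Q$. Your explicit flagging of the Noetherian hypothesis (needed so that $\kappa(P)$ is finitely presented over $A_P$ and so that associated primes are detected by torsion) is appropriate and matches the standing assumptions of the paper.

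One remark: the statement as printed in the paper still carries a typo in part~(1) (it mixes $P$ and $Q$ and the displayed set is missing braces); the intended assertion is that for $P\in\Spec A$ with $B/PB\neq 0$ one has $\Ass_A(B/PB)=\{P\}$ (equivalently, every $Q\in\Ass_B(B/PB)$ contracts to $P$). You have read and proved it in this intended sense.
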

Now we prove \hyperref[finiteness of associated primes]{Theorem D} which we restate again.
\begin{theorem}\label{proof of finite ass prime for diff ad alg}
    Let $R$ be a differentiably admissible $K$-algebra and $S$ be the polynomial ring $R[x_1,\ldots,x_m]$. If $I$ is an ideal of $S$, then the set $\Ass_S H^i_I(S)$ is finite.
\end{theorem}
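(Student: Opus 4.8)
The plan is to reduce the polynomial-ring statement to the already-known finiteness result over differentiably admissible $K$-algebras themselves (Theorem \ref{finite associated prime for diff adm alg}), via a localization argument at primes of $R$ combined with a flat base change. First I would observe that it suffices to handle a single polynomial variable, $S = R[x]$, and then iterate, since $R[x_1,\dots,x_m] = (R[x_1,\dots,x_{m-1}])[x_m]$ — but the obstruction here is that $R[x]$ need \emph{not} be differentiably admissible, so one cannot simply reapply Theorem \ref{finite associated prime for diff adm alg}; this is precisely the point flagged in the preliminaries. So instead I would stratify $\Spec(S)$ by the fibers of the map $\Spec(S)\to\Spec(R)$. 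For a prime $\q\in\Ass_S H^i_I(S)$ lying over $\p = \q\cap R$, localize: $H^i_I(S)_\p = H^i_{IS_\p}(S_\p)$ where $S_\p = R_\p[x_1,\dots,x_m]$, and $R_\p$ is again differentiably admissible. The idea is to separate the primes $\q$ that contract to a \emph{maximal} ideal $\p$ of $R_\p$-height matching $\dim R_\p$ (the "closed fiber" contribution) from those whose contraction $\p$ has smaller height, handling the latter by Noetherian induction on $\dim R$.

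The key steps, in order: (i) Set up the base change. Write $D(R,K)$ for the ring of differential operators; then $S = R[x_1,\dots,x_m]$ carries an action of $D(R,K)\langle \partial_{x_1},\dots,\partial_{x_m}\rangle$, and every $H^i_I(S)$ is a module over this ring. I expect the crucial structural input to be that this Weyl-type extension still has good finiteness behavior: one wants an analogue of "holonomic implies finitely many associated primes" in this mixed setting. (ii) For the closed fibers, complete: by Cohen's theorem $\widehat{R_\m} \cong F[[y_1,\dots,y_d]]$ for $\m$ maximal of height $d = \dim R$, so $\widehat{R_\m}[x_1,\dots,x_m]$ is a polynomial ring over a power series ring over a field — and finiteness of associated primes of local cohomology is known in that setting (this is essentially the smooth-algebra / Lyubeznik-functor machinery, or one invokes Bhatt–Blickle–Lyubeznik–Singh–Zhang–type results cited in the introduction). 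Faithful flatness of $R_\m\to\widehat{R_\m}$, together with Theorem \ref{Matsumura theorem for associated primes}(2), then transfers finiteness back: $\Ass$ upstairs is controlled by $\Ass$ downstairs since $\Ass_{\widehat S}(\widehat{R_\m}[x]/\p\widehat{R_\m}[x])$ is finite for each individual $\p$, and there are only finitely many relevant $\p$ by induction/holonomicity. (iii) For the non-maximal contractions, run Noetherian induction: if $\p = \q\cap R$ is not maximal, then $\q \in \Ass_{S_\p} H^i_I(S)_\p$ with $\dim R_\p < \dim R$, and the inductive hypothesis (Theorem D for the smaller-dimensional differentiably admissible algebra $R_\p$) gives finiteness of the possible $\qq S_\p$'s; one then needs that only finitely many such associated primes "spread out" to honest associated primes over $S$, which follows from the finiteness of $\Ass_R$ of the relevant local cohomology of $R$ itself (Theorem \ref{finite associated prime for diff adm alg}) forcing only finitely many candidate contractions $\p$.

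I expect the main obstacle to be step (i)–(ii): making rigorous the claim that $H^i_I(S)$, as a module over the polynomial-coefficient differential operator ring $D(R,K)[x_1,\dots,x_m;\partial_{x_1},\dots,\partial_{x_m}]$, inherits a holonomicity (or at least a "finitely many associated primes") property. The cleanest route is probably to show that this ring is itself of differentiable type — its associated graded is a polynomial ring over $\operatorname{gr}^\Sigma(D(R,K))$, hence Noetherian regular — so that the Bernstein-dimension / grade formula of \cite[Theorem 1.2.2]{MM-91} applies, and then verify that the $\check{\mathrm C}$ech construction of $H^i_I(S)$ stays within the Bernstein class (localizations at one element preserve holonomicity, and holonomic modules have finite length, hence finitely many associated primes). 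If that holonomicity is available for $S$ directly, then in fact the whole fibered argument collapses and one gets the theorem in one stroke; the stratification above is the fallback if holonomicity of $D(R,K)[x;\partial]$-modules arising as $H^i_I(S)$ is only obtainable fiberwise.
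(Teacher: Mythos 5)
Your reduction does not close, and the decisive gap is in step (ii). For the ``closed fiber'' you complete and claim that finiteness of associated primes of local cohomology over $\widehat{R_\m}[x_1,\ldots,x_m]\cong F[[y_1,\ldots,y_d]][x_1,\ldots,x_m]$ is already available from ``smooth-algebra / Lyubeznik-functor machinery'' or BBLSZ-type results. It is not: this ring is not local, not of finite type over a field, and not a smooth $\Z$-algebra of finite type, so neither Lyubeznik's characteristic-zero theorem on associated primes (which needs the local or affine case), nor \cite{BBLSZ-14}, applies; and it is not differentiably admissible either, since differentiable admissibility is destroyed by polynomial extensions (this is exactly the obstruction flagged in the preliminaries, e.g.\ $F[[t]][x]$ fails equicodimensionality), so Theorem \ref{finite associated prime for diff adm alg} cannot be invoked. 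In fact the statement you are citing as known is precisely the special case $R=F[[y_1,\ldots,y_d]]$ of the theorem being proved, so the argument is circular. The same obstruction defeats your ``cleanest route'': for $\D(R,K)[x_1,\ldots,x_m;\partial_{x_1},\ldots,\partial_{x_m}]$ to be a ring of differentiable type one needs, beyond regularity of the associated graded ring, that all its graded maximal ideals have equal height, and this equicodimensionality is what fails after adjoining polynomial variables; moreover, even granting a Bernstein-type inequality, the implication ``holonomic (finite length) $\Rightarrow$ finitely many associated primes over the base ring'' is only established under the differentiably admissible hypotheses (residue fields algebraic over $K$, etc.), not in this mixed setting.

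There is a second, independent gap in step (iii): fiberwise finiteness does not assemble into global finiteness unless you show that only finitely many primes $\p\subset R$ occur as contractions $\q\cap R$ of associated primes $\q$ of $H^i_I(S)$. Your justification --- that this follows from finiteness of $\Ass_R$ of ``the relevant local cohomology of $R$'' --- points at nothing concrete: $H^i_I(S)$, viewed over $R$, is not obtained by applying a Lyubeznik functor to $R$, and there is no a priori reason the set of contractions is finite; a priori infinitely many fibers could each contribute one associated prime. For contrast, the paper's proof avoids both problems by a single flat base change $S\to A=R[[x_1,\ldots,x_m]]$: the power series extension (unlike the polynomial one) \emph{is} differentiably admissible, so Theorem \ref{finite associated prime for diff adm alg} gives finiteness of $\Ass_A H^i_{IA}(A)$, and Theorem \ref{Matsumura theorem for associated primes} descends it to $S$ provided $PA\neq A$ for every $P\in\Ass_S H^i_I(S)$; this survival of primes is guaranteed when $P$ is graded, which is why the paper first treats homogeneous $I$ and then reduces the general case to it by homogenizing in $T=S[z]$ and descending along the faithfully flat map $S\to T_z=S[z,z^{-1}]$. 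Any repair of your outline would essentially have to prove this power-series/homogenization step first, at which point the fiberwise stratification becomes unnecessary.
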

\begin{proof}
    We divide the proof in two cases.\medskip
    
    \noindent
\textbf{Case 1:} $I$ is a homogeneous ideal.\medskip

Let $I$ be a homogeneous ideal in $S=R[x_1,\ldots,x_m]$. Consider the flat extension $S\rightarrow A=R[[x_1,\ldots,x_m]]$. Since $R$ is a differentiably admissible algebra, $R[[x_1,\ldots,x_m]]$ is also a differentiably admissible algebra. Therefore, by Theorem \ref{finite associated prime for diff adm alg}, $\Ass_AH^i_{IA}(A)$ is a finite set. Note that $H^i_I(S)\otimes_S A= H^i_{IA}(A)$.
By Theorem \ref{Matsumura theorem for associated primes},
\begin{align}\label{equation 1}
    \Ass_A(H^i_I(S)\otimes_SA)=\bigcup_{P\in \Ass_S H^i_I(S)}\Ass_A\left(A/PA\right)
\end{align}
We note that if $P\in \Ass_S H^i_I(S)$, then $\Ass_A\left(A/PA\right)\neq \phi$. This is because if $P\in \Ass_S H^i_I(S)$, then $P$ is a graded prime ideal of $S$. Therefore, $P\subseteq(\m,(x_1,\ldots,x_m))$ for some $\m$ maximal ideal of $S$. Let us denote the ideal $(\m,(x_1,\ldots,x_m))$ by $\n$.  Hence, there is a surjection $S/P\rightarrow S/\n$, i.e., we have an exact sequence $S/P\rightarrow S/\n\rightarrow 0$. Tensoring this sequence with $A$ gives us the exact sequence $ A/PA\rightarrow S/\n\otimes_S A\rightarrow 0$. Since $S/\n\otimes_S A \neq 0$, we get $A/PA\neq 0$ and hence $\Ass_A\left(A/PA\right)\neq \phi$. We also note that if $Q\in \Ass_A\left(A/PA\right)$, then $Q\cap S=P$. Therefore, if $P_1, P_2\in  \Ass_S H^i_I(S) $ such that $P_1\neq P_2$, then $\Ass_A\left(A/P_1A\right)\cap \Ass_A\left(A/P_2A\right)=\phi$. Therefore, from equation \ref{equation 1} we conclude that the set $\Ass_S H^i_I(S)$ is finite since $\Ass_A(H^i_I(S)\otimes_SA)$ is finite.\medskip 

 \noindent
\textbf{Case 2:} $I$ is not a homogeneous ideal.\medskip

Let $T=S[z]$ where $S=R[x_1,\ldots,x_m]$ with $\deg(z)=\deg(x_i)=1$ for all $i$.  Therefore, $S=T/(z-1)$. Let $\pi:T\rightarrow S$ be the natural homomorphism. We also have the localization map from $T\rightarrow T_z$. Then $\pi$ factors in a natural way through a homomorphism $\phi: T_z\rightarrow S$. Therefore, we have the following commutative diagram
\begin{equation*}
    \begin{tikzcd}
        T \arrow{d}{} \arrow{r}{\pi} & S \\
        T_z \arrow[dashed]{ru}{\phi}
    \end{tikzcd}
\end{equation*}
By \cite[Proposition 1.5.18]{Bruns}, $T_z\cong \left(T_z\right)_0[z,z^{-1}]$ and $ \left(T_z\right)_0\cong S$. Let $J$ be the homogenization of the ideal $I$ i.e., $J=IT_z\cap T$. Note that $IT_z=JT_z$. By case 1, $\Ass_TH^i_J(T)$ is a finite set and hence $\Ass_{T_z}H^i_{JT_z}(T_z)$ is finite set. Now
\begin{align*}
    \Ass_{T_z}H^i_{JT_z}(T_z)&=\Ass_{T_z}H^i_{IT_z}(T_z)\\&=\Ass_{S[z,z^{-1}]}H^i_{JS[z,z^{-1}]}(S[z,z^{-1}])\\&=\Ass_{T_z}(H^i_I(S)\otimes_ST_z).
\end{align*}
Note that $T_z=S[z,z^{-1}]$ is a faithfully flat extension of $S$ and if $P$ is a prime ideal of $S$, then $Tz/PT_z=(S/P)[z,z^{-1}]$ is non zero. Therefore, a similar argument as in Case 1 yields that $\Ass_S H^i_I(S)$ is finite.
\end{proof}

\section*{Acknowledgement}
	I would like to express my sincere gratitude to Prof. Tony J. Puthenpurakal, my PhD supervisor, for his constant guidance, encouragement, and for suggesting this problem. I am also thankful for his careful reading of the manuscript and valuable feedback. I thank the Government of India for support through the Prime Minister's Research Fellowship (PMRF ID: 1303161).

\begin{thebibliography}{10}

\bibitem{ACR-17}
Alba-Sarria, L., Callejas-Bedregal, R. and Caro-Tuesta, N., 2017. \emph{Finiteness properties of local cohomology modules over differentiable admissible algebras}, Journal of Pure and Applied Algebra, \textbf{221}(9), pp.2236-2249.

        \bibitem{NB-13}
L. N\'{u}\~{n}ez Betancourt, \emph{On certain rings of differentiable type and finiteness properties of local
cohomology,} J. Algebra \textbf{379} (2013), 1–10. MR 3019242

\bibitem{BBLSZ-14}
         B. Bhatt, M. Blickle, G. Lyubeznik, A. K. Singh, and W. Zhang, \emph{Local cohomology modules of a smooth $\mathbb{Z}$-algebra have finitely many associated primes}, Inventiones mathematicae \textbf{197}, no. 3 (2014): 509-519.
		

 \bibitem{Bruns}
W. Bruns and J. Herzog, \emph{Cohen-Macaulay rings,} Cambridge Studies in Advanced Mathematics, vol. 39, Cambridge University Press, Cambridge, 1993. MR 1251956

\bibitem{MD-16}
M. Dorreh, \emph{On the injective dimension of F-finite modules and holonomic D-modules}, Illinois J. Math. \textbf{60} (2016) 819–831.  MR3705446 

\bibitem{Hel-08}
M. Hellus, \emph{A note on the injective dimension of local cohomology modules}, Proc. Amer. Math. Soc. \textbf{136} (2008), 2313–2321. MR 2390497
 
 \bibitem{HS-93}
 C. Huneke and R. Sharp, \emph{Bass numbers of local cohomology modules}, Trans. Amer.
 Math. Soc. \textbf{339} (1993), 765–779. MR 1124167

 \bibitem{Kat-02}
M. Katzman, \emph{An example of an infinite set of associated primes of a local cohomology module}, J. Algebra \textbf{252} (2002), no. 1, 161–166. MR1922391 

\bibitem{Lyu-93}
G. Lyubeznik, \emph{Finiteness properties of local cohomology modules (an application of D-modules to commutative algebra)}, Invent. Math. \textbf{113} (1993), no. 1, 41–55. MR 1223223
        
\bibitem{Lyu-97}
\bysame, \emph{F-modules: Applications to local cohomology and D-modules in characteristic p$>$0}, J. Reine Angew. Math. \textbf{491} (1997), 65–130. MR 1476089
         
\bibitem{Lyu-99}
\bysame, \emph{A partial survey of local cohomology}, Local cohomology and its applications (Guanajuato, 1999), Lecture Notes in Pure and Appl. Math., vol. 226, Dekker, New York, 2002, pp. 121–154. MR1888197

\bibitem{Mat-86}
		H. Matsumura, \emph{Commutative ring theory}, Cambridge Studies in Advanced Mathematics, vol. 8, Cambridge University Press, Cambridge, 1986, Translated from the Japanese by M. Reid. MR 879273

\bibitem{Mac-14}
 L. Narv\'{a}ez-Macarro, \emph{Differential structures in commutative algebra}, Mini-course at the XXIII Brazilian Algebra Meeting, July 27–August 1, 2014, Maring\'{a}, Brazil.

 \bibitem{MM-91}
Z. Mebkhout and L. Narv\'{a}ez-Macarro, \emph{La th\'{e}orie du polyn\^{o}me de Bernstein-Sato pour les alg\'{e}bres de Tate et de Dwork-Monsky-Washnitzer,} Ann. Sci. \'{E}cole Norm. Sup. (4) \textbf{24} (1991), no. 2, 227–256. MR 1097693

    \bibitem{MJB-21}
J.\ {\'A}lvarez Montaner, J.\ Jeffries, and L.\ N\'u\~nez-Betancourt, \emph{Bernstein‐Sato polynomials in commutative algebra}, arXiv:2106.08830 (2021).

\bibitem{Put-14}
T. J. Puthenpurakal, \emph{On injective resolutions of local cohomology modules}, Illinois J. Math. \textbf{58} (2014), no. 3, 709–718. MR 3395959 

\bibitem{Put-18}
\bysame, \emph{On the ring of differential operators of certain regular domains}, Proc. Amer. Math. Soc. \textbf{146} (2018), no. 8, 3333–3343. MR 3803659

 \bibitem{SP-24}
S. S. Islam and T. J. Puthenpurakal, \emph{On some rings of differentiable type.} arXiv preprint arXiv:2406.05390 (2024).

\bibitem{AS-00}
A. K. Singh,
\emph{$p$-torsion elements in local cohomology modules},
Math. Res. Lett. \textbf{7} (2000), no. 2-3, 165–176. 

 \bibitem{SZ-19}
    N. Switala and W. Zhang, \emph{A dichotomy for the injective dimension of F-finite F-modules and holonomic D-modules}, Comm. Algebra \textbf{47} (2019), no. 6, 2525–2539. MR3957114. 
    
\bibitem{Ass-04}
A. K. Singh and I. Swanson, \emph{Associated primes of local cohomology modules and of Frobenius powers}, Int. Math. Res. Not. (2004), no. 33, 1703–1733.  MR2058025  

\bibitem{WZ-17}
W. Zhang, \emph{On injective dimension of $F$-finite $F$-modules and holonomic $D$-modules}, Bull. Lond. Math. Soc. \textbf{49} (2017), no. 4, 593–603. MR 3725482 

    \bibitem{CZ-98}
    C. Zhou, \emph{Higher derivations and local cohomology modules}, J. Algebra \textbf{201} (1998), no. 2, 363–372. MR 1612378 
	\end{thebibliography}

\providecommand{\bysame}{\leavevmode\hbox to3em{\hrulefill}\thinspace}
\providecommand{\MR}{\relax\ifhmode\unskip\space\fi MR }
\providecommand{\MRhref}[2]{
  \href{http://www.ams.org/mathscinet-getitem?mr=#1}{#2}
}

\end{document}